\newcommand{\bkm}{B_{k,m}}
\newtheorem{thm}{Theorem}
\newtheorem{lem}[thm]{Lemma}
\newtheorem{cor}[thm]{Corollary}
\begin{document}

\title{Mixed, Multi-color, and Bipartite Ramsey Numbers Involving Trees of Small Diameter}

\titlerunning{Mixed Ramsey Numbers}        

\author{Jeremy F. Alm       \and
        Nicholas Hommowun   \and
        Aaron Schneider
}


\institute{Jeremy F. Alm, Nicholas Hommowun,   and
        Aaron Schneider  \at
              Department of Mathematics, Illinois College, 1101 W. College Ave.,Jacksonville, IL 62650 \\
              Tel.: 217-245-3468\\
 \email{alm.academic@gmail.com}           
%
%
}

\date{Received: date / Accepted: date}

\maketitle

\begin{abstract}
In this paper we study Ramsey numbers for trees of diameter 3
(bistars) vs., respectively, trees of diameter 2 (stars), complete
graphs, and many complete graphs.  In the case of bistars vs. many
complete graphs, we determine this number exactly as a function of
the Ramsey number for the complete graphs.  We also determine the
order of growth of the bipartite $k$-color Ramsey number for a
bistar. \keywords{ Ramsey numbers of trees \and bipartite Ramsey
numbers \and stars \and bistars}
\end{abstract}

\section{Introduction}

\subsection{Background}

In this paper we investigate Ramsey numbers, both classical and
bipartite, for trees vs.~other graphs.  Trees have been studied less
than other graphs, although there have been a number of papers in
the last few years.  Some general results applying to all trees are
known, such as the following result of Gy\'{a}rf\'{a}s and Tuza
\cite{Gyarfas87}.

\begin{thm}
  Let $T_n$ be a tree with $n$ edges.  Then $R_k(T_n)\leq (n-1)(k+\sqrt{k(k-1)})+2$.
\end{thm}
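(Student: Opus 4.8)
The plan is to reduce everything to a single structural fact about trees: a graph whose minimum degree is at least $n$ contains \emph{every} tree with $n$ edges. First I would prove this embedding lemma by a greedy argument. Fix a tree $T_n$ and order its $n+1$ vertices $x_0, x_1, \dots, x_n$ so that each $x_j$ (for $j\ge 1$) has exactly one neighbor among $x_0,\dots,x_{j-1}$; such an order exists for any tree. Embedding the vertices one at a time into a host graph $G$, when I place $x_j$ I only need a neighbor of the image of its already-placed parent that avoids the at most $j-1\le n-1$ previously used vertices, and minimum degree at least $n$ guarantees one. The consequence I will record is that if $G$ is $T_n$-free then every nonempty subgraph of $G$ has a vertex of degree at most $n-1$; that is, $G$ is $(n-1)$-degenerate, and hence $e(G)\le (n-1)\,|V(G)|$.

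Applying this to a $k$-coloring of $K_N$ with no monochromatic $T_n$, each color class $G_i$ is $(n-1)$-degenerate, so summing edges gives $\binom{N}{2}=\sum_{i=1}^k e(G_i)\le k(n-1)N$, which already yields the crude bound $R_k(T_n)\le 2k(n-1)+2$. Since $k+\sqrt{k(k-1)}<2k$, the theorem is a genuine sharpening of this, and the entire difficulty lies in squeezing out the constant factor between $2k$ and $k+\sqrt{k(k-1)}$.

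To get the improvement I would replace the global edge count by a local, two-sided embedding argument. The key is that $T_n$ can be split at any edge $xy$ into two rooted subtrees with $a$ and $b$ edges, $a+b=n-1$; if $uv$ is a monochromatic edge in color $i$ and I can greedily grow the $x$-side from $v$ and the $y$-side from $u$, I obtain a monochromatic $T_n$. This means a color-$i$ edge both of whose endpoints have large color-$i$ degree is forbidden, so around a vertex $v$ of large color-$i$ degree $D$ the color-$i$ neighbors are forced to have small color-$i$ degree. Balancing the two sides of the split against the degeneracy bound upgrades the linear inequality $N-1\le 2k(n-1)$ to a quadratic one, essentially
\[
 (N-2)^2 - 2k(n-1)(N-2) + k(n-1)^2 \le 0,
\]
whose larger root is exactly $(n-1)\bigl(k+\sqrt{k(k-1)}\bigr)+2$; solving for $N$ then gives the stated bound.

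The step I expect to be the main obstacle is the two-sided greedy embedding together with its optimization. Making the splitting uniform over all trees with $n$ edges (paths and balanced double stars are the extremal cases, whereas stars are far from tight) requires choosing the edge $xy$ and the two roots so that both greedy extensions succeed simultaneously, and then optimizing the split sizes $a,b$ to produce the quadratic above. Verifying the exact additive constant $+2$, rather than introducing an off-by-one error, is where I would have to be most careful.
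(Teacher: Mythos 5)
The paper itself gives no proof of this theorem --- it is quoted from Gy\'arf\'as and Tuza \cite{Gyarfas87} --- so your proposal has to be measured against their published argument. Your first half is correct and is exactly the right setup: the greedy order $x_0,\dots,x_n$ shows that any graph of minimum degree at least $n$ contains every tree with $n$ edges, hence a $T_n$-free graph is $(n-1)$-degenerate, and each color class $G_i$ of a $k$-coloring of $K_N$ with no monochromatic $T_n$ obeys an edge bound. The genuine gap is the second half. The ``two-sided embedding'' that is supposed to upgrade the linear inequality to $(N-2)^2-2k(n-1)(N-2)+k(n-1)^2\le 0$ is never carried out: you do not specify how the split sizes $a,b$ are chosen for an arbitrary tree, how the two greedy extensions from $u$ and $v$ avoid colliding with each other (which is precisely the difficulty you defer to the end), or how any of this yields the displayed quadratic. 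As written, that inequality is reverse-engineered from the answer rather than derived, and the proof of the actual bound $(n-1)(k+\sqrt{k(k-1)})+2$ is missing.

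The irony is that no new embedding idea is needed: the $(n-1)$-degeneracy you already established carries the whole improvement, provided you do not throw away the lower-order term. Ordering the vertices of an $(n-1)$-degenerate graph so that each has back-degree at most $n-1$, the $j$-th vertex also has back-degree at most $j-1$, so $e(G_i)\le (n-1)N-\binom{n}{2}$ rather than merely $(n-1)N$. Summing over the $k$ colors gives $\binom{N}{2}\le k(n-1)N-k\binom{n}{2}$, i.e.\ $N^2-\bigl(2k(n-1)+1\bigr)N+kn(n-1)\le 0$. The discriminant is $4(n-1)^2k(k-1)+1$, and using $\sqrt{A+1}\le\sqrt{A}+1$ the larger root is at most $(n-1)\bigl(k+\sqrt{k(k-1)}\bigr)+1$; hence $R_k(T_n)\le (n-1)\bigl(k+\sqrt{k(k-1)}\bigr)+2$. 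This is essentially the Gy\'arf\'as--Tuza proof. So your plan assembles all the necessary ingredients, but by weakening the degeneracy count to $e(G_i)\le(n-1)N$ you discard the $-k\binom{n}{2}$ term that produces the quadratic, and the unproved local argument you propose to recover it is not needed and, as sketched, does not work.
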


More recently, various researchers have studied particular trees of
small diameter.  Burr and Roberts \cite{Burr73} completely determine
the Ramsey number $R(S_{n_1},\ldots,S_{n_i})$ for any number of
\emph{stars}, i.e., trees of diameter 2.  Boza et.~al.~\cite{Boza10}
determine $R(S_{n_1},\ldots,S_{n_i},K_{m_1},\ldots,K_{m_j})$ exactly
as a function of $R(K_{m_1},\ldots,K_{m_j})$.  Bahls and Spencer
\cite{Bahls13} study $R(C,C)$, where $C$ is a caterpillar, i.e., a
tree whose non-leaf vertices form a path.  They prove a general
lower bound, and prove exact results in several cases, including
``regular" caterpillars, in which all non-leaf vertices have the
same degree.

We will study bistars (i.e.~trees of diameter 3) vs.~stars and
bistars vs. complete graphs in Section \ref{S2}, bistars vs.~many
complete graphs in Section \ref{S3}, and bistars vs.~bistars in
bipartite graphs in Section \ref{S4}.

\subsection{Notation}

For graphs $G_1,\ldots,G_n$, let $R(G_1,\ldots, G_n)$ denote the
least integer $N$ such that any edge-coloring of $K_N$ in $n$ colors
must contain, for some $1\leq i\leq n$,   a monochromatic $G_i$ in
the $i^{\text{th}}$ color.  Let $S_n$ denote the $(n+1)$-vertex
graph consisting of a vertex $v$ of degree $n$ and $n$ vertices of
degree 1 (a star).  Let $B_{k,m}$ denote the $(k+m)$-vertex graph
with a vertex $v$ of degree $k$, a vertex $w$ incident to $v$ of
degree $m$, and $k+m-2$ vertices of degree 1 (a \emph{bistar}).  We
will call the edge $vw$ the \emph{spine} of $B_{k,m}$.  (Note that
some authors refer to the set of vertices $\{ v,w\}$ as the spine.)
We will depict the spine of a bistar with a double-struck edge; see
Figure \ref{fig:F0}.

\begin{figure}[htb!]
\centering
\includegraphics[width=120pt]{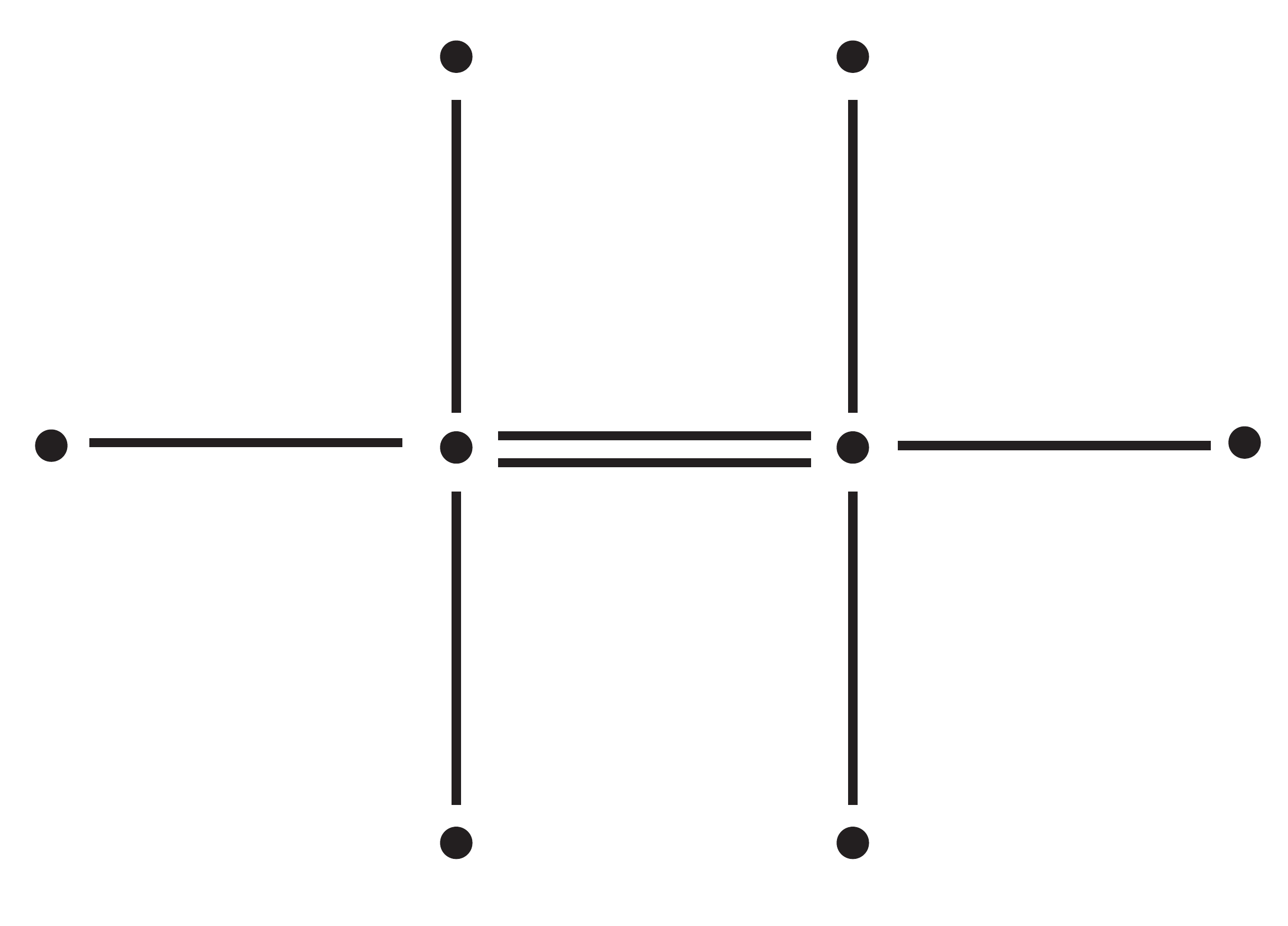}
\caption{A bistar, with spine indicated} \label{fig:F0}
\end{figure}

For a graph $G$ whose edges are colored red and blue, and for
vertices $v$ and $w$, if $v$ and $w$ are incident by a red edge, we
will say (for the sake of brevity) that $w$ is a ``red neighbor" of
$v$.  Let $\text{deg}_{\text{red}}(v)$ denote the number of red
neighbors of $v$, and let $$\Delta_{\text{red}}(G)=\text{max}\{
\text{deg}_\text{red}(v):v\in G\}$$ and
$$\delta_{\text{red}}(G)=\text{min}\{ \text{deg}_\text{red}(v):v\in
G\}.$$

In Section \ref{S2} we will make use of \emph{cyclic colorings}.
Let $K_N$ have vertex set $\{ 0,1,2,\ldots,N-1\}$, and let
$R\subseteq \mathbb{Z}_N\backslash 0$ such that $R=-R$, i.e., $R$ is
closed under additive inverse.  Define a coloring of $K_N$ by
\[
  uv\textrm{ is colored \emph{red} if }u-v\in R\textrm{ and \emph{blue} otherwise.}
\]

Cyclic colorings are computationally nice.  For instance, it is not
hard to show that if $R\subseteq R+R$, then any two vertices $v$ and
$w$ incident by a red edge must share a red neighbor.  We will need
this fact in the proof of Theorem \ref{LB}.

\section{Mixed 2-Color Ramsey Numbers}\label{S2}

First we consider bistars vs. stars.  We have the following easy
upper bound.
\begin{thm}\label{UB}
  $R(\bkm,S_n)\leq k+m+n-1$.
\end{thm}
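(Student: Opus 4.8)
The plan is to prove the bound by contradiction on the blue color. Set $N=k+m+n-1$, take an arbitrary red/blue coloring of $K_N$, and assume it contains no blue $S_n$; I will then exhibit a red $\bkm$. The first step is to convert the absence of a blue star into a degree condition. A blue $S_n$ is exactly a vertex with $n$ blue neighbors, so its absence means $\text{deg}_{\text{blue}}(u)\leq n-1$ for every vertex $u$. Since each vertex has $N-1=k+m+n-2$ neighbors in total, this forces $\text{deg}_{\text{red}}(u)\geq (k+m+n-2)-(n-1)=k+m-1$ for all $u$, i.e.\ $\delta_{\text{red}}(K_N)\geq k+m-1$. In particular every vertex has a red neighbor, so at least one red edge exists.

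Next I would build the red bistar around a single red edge. Fix any red edge $vw$, and let $A$ be the set of red neighbors of $v$ other than $w$, and $B$ the set of red neighbors of $w$ other than $v$. The degree bound gives $|A|\geq k+m-2$ and $|B|\geq k+m-2$. The goal is to carve out the two halves of the bistar: a set $A'$ of $k-1$ red leaves attached to $v$ and a set $B'$ of $m-1$ red leaves attached to $w$, with $A',B'$ disjoint and neither meeting $\{v,w\}$. Once this is done, $v$ (joined to $w$ and to $A'$) has red degree $k$, $w$ (joined to $v$ and to $B'$) has red degree $m$, and together with the $k+m-2$ leaves this is precisely a red $\bkm$ with spine $vw$.

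The only real content is guaranteeing disjointness, and I would handle it greedily rather than with any matching machinery. Since $|A|\geq k+m-2\geq k-1$, first choose any $(k-1)$-element subset $A'\subseteq A$. Deleting $A'$ from $B$ leaves $|B\setminus A'|\geq (k+m-2)-(k-1)=m-1$, so I can choose any $(m-1)$-element subset $B'\subseteq B\setminus A'$; by construction $A'$ and $B'$ are disjoint, and neither contains $v$ or $w$. This yields the red $\bkm$ and completes the proof. The step I expect to require the most care is exactly this bookkeeping — keeping the two leaf sets disjoint from each other and from the spine — since everything rides on the slack $|A|,|B|\geq k+m-2$ coming from the minimum red degree. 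I would also verify that the degenerate cases $k=1$ or $m=1$ (where one half contributes no leaves) fall out of the same inequalities, which they do since $k+m-2\geq k-1$ whenever $m\geq 1$.
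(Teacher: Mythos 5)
Your proposal is correct and follows essentially the same route as the paper: derive $\delta_{\text{red}}\geq k+m-1$ from the absence of a blue $S_n$, then greedily select $k-1$ and $m-1$ disjoint red leaves around any red edge, which is exactly the paper's ``even if these sets of neighbors coincide'' argument made explicit. The extra bookkeeping you supply is fine but adds nothing beyond the published proof.
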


\begin{proof}
  Let $N=k+m+n-1$, and let the edges of $K_N$ be colored in red and blue.  Suppose this coloring contains no blue $S_n$. Then every red edge is the spine of a red $\bkm$, as follows.

  If there is no blue $S_n$, then $\Delta_{\text{blue}}\leq n-1$, and hence $\delta_{\text{red}}\geq(N-1)-(n-1)=k+m-1$.  Let the edge $uv$ be colored red.  Then both $u$ and $v$ have $(k-1)+(m-1)$ red neighbors besides each other.  Even if these sets of neighbors coincide, we may select $k-1$ leaves for $u$ and $m-1$ leaves for $v$, giving a red $B_{k,m}$.
\end{proof}

The following lower bound uses some cyclic colorings.

\begin{thm}\label{LB}
  $R(\bkm,S_n)>\lfloor\frac{k+m}{2}\rfloor+n$ for $k,m\geq 4$.
\end{thm}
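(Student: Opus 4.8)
The plan is to establish this lower bound constructively: on $N=\lfloor\frac{k+m}{2}\rfloor+n$ vertices I will exhibit a red/blue coloring of $K_N$ containing neither a red $\bkm$ nor a blue $S_n$. A blue $S_n$ is precisely a vertex with $n$ blue neighbors, so avoiding it amounts to forcing $\Delta_{\text{blue}}\le n-1$, equivalently $\delta_{\text{red}}\ge N-n=\lfloor\frac{k+m}{2}\rfloor$. I would use a cyclic coloring, which is regular, taking the red connection set $R$ of size exactly $|R|=\lfloor\frac{k+m}{2}\rfloor$; then every vertex has red degree $\lfloor\frac{k+m}{2}\rfloor$ and blue degree exactly $n-1$, so no blue $S_n$ can occur.

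To rule out the red $\bkm$ I would choose $R$ so that $R\subseteq R+R$ --- for instance a symmetric interval $R=\{\pm1,\pm2,\dots,\pm t\}$, which satisfies $R\subseteq R+R$ as soon as $t\ge2$. By the fact recorded before the theorem, the endpoints $v,w$ of every red edge then have a common red neighbor. Now suppose some red edge $vw$ were the spine of a red $\bkm$. Its $k-1$ leaves at $v$ and $m-1$ leaves at $w$ would be $k+m-2$ distinct vertices, each a red neighbor of $v$ or of $w$ other than $w$ or $v$; writing $A$ and $B$ for these two punctured red neighborhoods (each of size $|R|-1$), this forces $|A\cup B|\ge k+m-2$. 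But the shared red neighbor gives $|A\cap B|\ge1$, whence
\[
  |A\cup B|=|A|+|B|-|A\cap B|\le 2(|R|-1)-1=2\Big\lfloor\tfrac{k+m}{2}\Big\rfloor-3\le k+m-3,
\]
a contradiction. Hence no red edge is a spine, the coloring avoids both forbidden graphs, and $R(\bkm,S_n)>N$ follows. Note that this count never needs to separate the cases $k=m$ and $k\ne m$: the single union bound does all of the work.

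The step requiring real care --- and the step I expect to be the main obstacle --- is realizing a \emph{symmetric} $R$ of the exact cardinality $\lfloor\frac{k+m}{2}\rfloor$ with $R\subseteq R+R$. Because $R=-R$, a symmetric set has even size unless $N$ is even and the self-paired element $N/2$ is adjoined, so the construction must branch on the parity of $\lfloor\frac{k+m}{2}\rfloor$ and of $N$. When $\lfloor\frac{k+m}{2}\rfloor$ is even a symmetric interval works directly; when it is odd but $N$ is even one adjoins $N/2$ to a symmetric interval of size $|R|-1$. The odd-size, odd-$N$ case is the genuinely delicate one: no cyclic $R$ of the required size exists, so either $N<k+m$ trivializes the claim, or one must replace the cyclic coloring by a near-regular graph and verify by a sharper count that adjacent vertices still share enough common neighbors to keep $|A\cup B|<k+m-2$. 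The hypothesis $k,m\ge4$ enters precisely in guaranteeing $\lfloor\frac{k+m}{2}\rfloor\ge4$, hence $t\ge2$, which is exactly what makes $R\subseteq R+R$ hold for the interval in the first place.
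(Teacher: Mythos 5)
Your even-size case is sound: when $\lfloor\frac{k+m}{2}\rfloor=2t$ is even, the interval $\{\pm1,\dots,\pm t\}$ does satisfy $R\subseteq R+R$, and the union bound $2\lfloor\frac{k+m}{2}\rfloor-3\le k+m-3<k+m-2$ correctly kills every spine. But the two odd-size branches, which you yourself flag as the delicate part, are where the proof actually lives, and neither is carried out correctly. First, adjoining $N/2$ to $\{\pm1,\dots,\pm t\}$ destroys the property $R\subseteq R+R$: a sum of two interval elements represents a residue in $\{0,\pm1,\dots,\pm 2t\}$, and $N/2+c\neq N/2$ for $c\neq 0$, so $N/2\notin R+R$ whenever $N>4t$. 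The antipodal red edges $\{u,u+N/2\}$ then have no guaranteed common red neighbor, and in the sub-case where that neighbor is genuinely needed --- $k+m$ even, so $2(|R|-1)=k+m-2$ exactly --- your coloring can actually contain a red $\bkm$. Concretely, for $k=m=5$, $n=5$ you get $N=10$, $R=\{\pm1,\pm2,5\}$, and the edge $\{0,5\}$ has disjoint punctured red neighborhoods $\{1,2,8,9\}$ and $\{3,4,6,7\}$: a red $B_{5,5}$ with spine $\{0,5\}$. Second, the odd-size/odd-$N$ branch is simply left open; ``replace the cyclic coloring by a near-regular graph and verify a sharper count'' is a plan, not a proof, and near-regularity is dangerous here because even one vertex of red degree $\lfloor\frac{k+m}{2}\rfloor+1$ pushes the union bound up to $k+m-2$.

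The paper avoids both problems by branching on the parity of $k+m$ rather than on the parities of $|R|$ and $N$. When $k+m$ is odd, the red degree is only $\frac{k+m-1}{2}$, so $2(\frac{k+m-1}{2}-1)=k+m-3$ already, and no common-neighbor argument --- hence no condition $R\subseteq R+R$, hence no cyclic coloring --- is needed: any $(n-1)$-regular blue graph works. When $k+m$ is even, it uses cyclic sets built from $\{2\}$ together with small odd residues, and in the odd-$|R|$ subcase the extra element adjoined is $\frac{k+m}{2}$, chosen precisely so that it equals $2+\frac{k+m-4}{2}$ and therefore lies in $R+R$. To salvage your version you need at least these two ideas: the extra element must itself be a sum of two elements of $R$, and the $k+m$ odd case should be handled by the bare degree count rather than forced into the cyclic framework.
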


\begin{proof}
  Let $k+m$ be odd.  Let $N=\lfloor\frac{k+m}{2}\rfloor+n$.  Let $G$ be any $(n-1)$-regular graph on $N$ vertices.  Consider the edges of $G$ to be the \emph{blue} edges, and replace all non-edges of $G$ with red edges, so that the resulting $K_N$ is $\lfloor\frac{k+m}{2}\rfloor$-regular for red.  Clearly, this coloring admits no blue $S_n$.  Consider the red edge set.  If an edge $uv$ is colored red, then $u$ and $v$ combined have at most $k+m-3$ red neighbors besides each other, which is not enough to supply the needed $k-1$ red leaves for $u$ and the $m-1$ red leaves for $v$.

  Now let $k+m$ be even, and $N=\frac{k+m}{2}+n$.  We seek a subset $R\subseteq\mathbb{Z}_N$ that is symmetric $(R=-R)$ and of size $\frac{k+m}{2}$ satisfying $R\subseteq R+R$.  Thus each vertex will have red degree $\frac{k+m}{2}$, but any red edge $uv$ cannot be the spine of a red $\bkm$, since $u$ and $v$ will have a common neighbor.  There are two cases:

  Case (i.): $\frac{k+m}{2}$ is even.  Let $R'=\{ 2\}\cup\{ 2\ell+1:1\leq\ell\leq\frac{k+m-4}{4}\}$, and let $R:=R'\cup -R'$.  It is easy to check that $R\subseteq R+R$.  Setting $B=\mathbb{Z}_N\backslash\{ R\cup 0\}$, we have $|B|=n-1$, and so the cyclic coloring of $K_N$ induced by $R$ and $B$ has no red $\bkm$ and no blue $S_n$.

  Case (ii.): $\frac{k+m}{2}$ is odd.  Let $R'=\{ 2\}\cup\{ 2\ell+1:1\leq\ell\leq\frac{k+m-6}{4}\}$, and set $R:=R'\cup\{\frac{k+m}{2}\}\cup -R'$.  Again, set $B=\mathbb{Z}_N\backslash\{ R\cup 0\}$, and the cyclic coloring of $K_N$ induced by $R$ and $B$ has the desired properties.
\end{proof}

\begin{cor} \label{cor}
  $R(B_{n,n},S_n)>2n$ for $n\geq 4$.
\end{cor}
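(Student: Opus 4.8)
The plan is to derive Corollary \ref{cor} directly from Theorem \ref{LB} by specializing the parameters $k=m=n$. First I would observe that the corollary is asking for a lower bound on $R(B_{n,n},S_n)$, and Theorem \ref{LB} already provides a lower bound on $R(B_{k,m},S_n)$ whenever $k,m\geq 4$. So the natural approach is to set $k=m=n$ and read off what Theorem \ref{LB} gives.

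Carrying this out, I would substitute $k=m=n$ into the conclusion of Theorem \ref{LB}, obtaining
\[
  R(B_{n,n},S_n)>\left\lfloor\frac{n+n}{2}\right\rfloor+n=\left\lfloor\frac{2n}{2}\right\rfloor+n=n+n=2n.
\]
The hypothesis of Theorem \ref{LB} requires $k,m\geq 4$, which under the substitution becomes $n\geq 4$; this matches exactly the hypothesis stated in the corollary. Since $2n$ is even, the relevant case of the theorem is the even case with $\frac{k+m}{2}=n$, so the bound follows from the cyclic-coloring construction there (either Case (i.) or Case (ii.) depending on the parity of $n$), and no new argument is needed.

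The only thing to verify is that the floor term simplifies cleanly: since $k+m=2n$ is automatically even, the floor is superfluous and $\lfloor\frac{2n}{2}\rfloor=n$ exactly, giving the clean bound $2n$ with no rounding loss. I expect no real obstacle here, as the corollary is a pure specialization; the one point worth stating explicitly is that the constraint $k,m\geq 4$ collapses to the single constraint $n\geq 4$, so the range of validity transfers without change. Thus the entire proof consists of invoking Theorem \ref{LB} with $k=m=n$ and simplifying the arithmetic.
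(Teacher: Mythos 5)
Your proposal is correct and matches the paper's intent exactly: the corollary is stated without proof precisely because it is the immediate specialization $k=m=n$ of Theorem \ref{LB}, yielding $\lfloor\frac{2n}{2}\rfloor+n=2n$ under the hypothesis $n\geq 4$. Nothing further is needed.
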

We conjecture that the lower bound in   Corollary \ref{cor} is
tight; that is, that $R(B_{n,n},S_n)=2n+1$ for $n\geq 4$.  We show
that this result obtains for $n=4$ (but not for $n=3$).

\begin{thm}
  $R(B_{3,3},S_3)=6$.
\end{thm}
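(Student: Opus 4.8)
The plan is to establish the two inequalities $R(B_{3,3},S_3)\ge 6$ and $R(B_{3,3},S_3)\le 6$ separately, since the general bounds available to us are not sharp at this small value: Theorem~\ref{UB} only yields $R(B_{3,3},S_3)\le 8$, while Corollary~\ref{cor} (whose construction would give a lower bound $>6$) requires $n\ge 4$. A bespoke argument is therefore needed at $n=3$, and in fact the value $6=2n$ rather than the conjectured $2n+1$ reflects the failure of the lower-bound construction of Theorem~\ref{LB} in this small case.

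For the lower bound I would exhibit a red/blue coloring of $K_5$ with no red $B_{3,3}$ and no blue $S_3$. Color $K_5$ so that the red edges form a $5$-cycle and the blue edges form the complementary $C_5$; every vertex then has red degree $2$ and blue degree $2$. Since $\Delta_{\text{blue}}=2<3$ there is no blue $S_3$, and since every vertex has red degree $2$, a red edge $vw$ leaves each endpoint with only one red neighbor besides the other, so it cannot supply the two red leaves needed at each endpoint of a red $B_{3,3}$. Hence $R(B_{3,3},S_3)>5$.

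For the upper bound, suppose $K_6$ is colored with no blue $S_3$; I must produce a red $B_{3,3}$. The absence of a blue $S_3$ forces $\Delta_{\text{blue}}\le 2$, hence $\delta_{\text{red}}\ge 3$. The key reduction is a clean criterion for when a red edge $vw$ is a red spine: writing $A$ and $B$ for the red neighbors of $v$ and of $w$ (each excluding the other endpoint), a red $B_{3,3}$ with spine $vw$ exists iff we can choose disjoint $2$-subsets of $A$ and of $B$, which holds iff $|A|,|B|\ge 2$ and $|A\cup B|\ge 4$. Now $\delta_{\text{red}}\ge 3$ gives $|A|,|B|\ge 2$ automatically, and among the four vertices other than $v,w$ one has $|A\cup B|=4-(\text{number of common blue neighbors of }v,w)$; so the criterion collapses to: there exist two vertices $v,w$ that are non-adjacent in blue and share no common blue neighbor. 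This is now a statement purely about the blue graph $H$, which satisfies $\Delta(H)\le 2$.

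The main obstacle is this last step — ruling out every blue configuration — but the hypothesis $\Delta(H)\le 2$ makes it transparent. Such a pair $v,w$ fails to exist only if $H$ is connected of diameter $\le 2$; yet a connected graph on $6$ vertices with $\Delta\le 2$ is $P_6$ or $C_6$, each of diameter $\ge 3$, and a disconnected $H$ trivially contains two vertices in different components (non-adjacent, with no common neighbor). So the required pair always exists, giving a red $B_{3,3}$ and thus $R(B_{3,3},S_3)\le 6$; combined with the lower bound this yields $R(B_{3,3},S_3)=6$. The one point deserving care is the equivalence in the spine criterion (the feasibility condition $|A\cup B|\ge 4$), which I would verify by a short case analysis on the sizes of $A\setminus B$ and $B\setminus A$.
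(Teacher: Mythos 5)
Your proof is correct, and your upper-bound argument takes a genuinely different route from the paper's. The paper works entirely in the red graph $G$ (which has $\delta_{\text{red}}\ge 3$) and runs a case analysis on its maximum degree: a vertex of red degree $5$ gives a $B_{3,3}$ immediately; vertices of degree $4$ are handled by inspecting the possible configurations and the degree sequence $(4,4,3,3,3,3)$; and the $3$-regular case is settled by observing that if every pair of adjacent vertices shared exactly one neighbor, $G$ would decompose into edge-disjoint triangles, forcing every degree to be even --- a contradiction. You instead isolate the exact criterion for a red edge $vw$ to be a spine ($|A|,|B|\ge 2$ and $|A\cup B|\ge 4$, which in $K_6$ reduces to ``$v$ and $w$ have no common blue neighbor''), and then transfer the whole problem to the blue graph $H$ with $\Delta(H)\le 2$: either $H$ is disconnected, or it is $P_6$ or $C_6$ and hence has diameter at least $3$; either way the required pair exists. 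Your route is shorter and avoids the degree-sequence casework; the price is the small Hall-type verification of the spine criterion, which you rightly flag and which does check out (the three conditions are exactly the deficiency conditions for choosing disjoint $2$-subsets of $A$ and $B$). The lower bounds are identical: both use the red/blue $C_5$ decomposition of $K_5$. Note that your spine criterion is essentially the paper's observation, in its $3$-regular case, that ``any adjacent vertices share a neighbor'' when no $B_{3,3}$ exists; you have promoted it to the organizing principle of the entire upper-bound proof.
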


\begin{proof}
  A lower bound is supplied by the classic critical coloring of $K_5$ for $R(3,3)$.  See Figure \ref{fig:FP}.

  \begin{figure}[htb!]
\centering
\includegraphics[width=90pt]{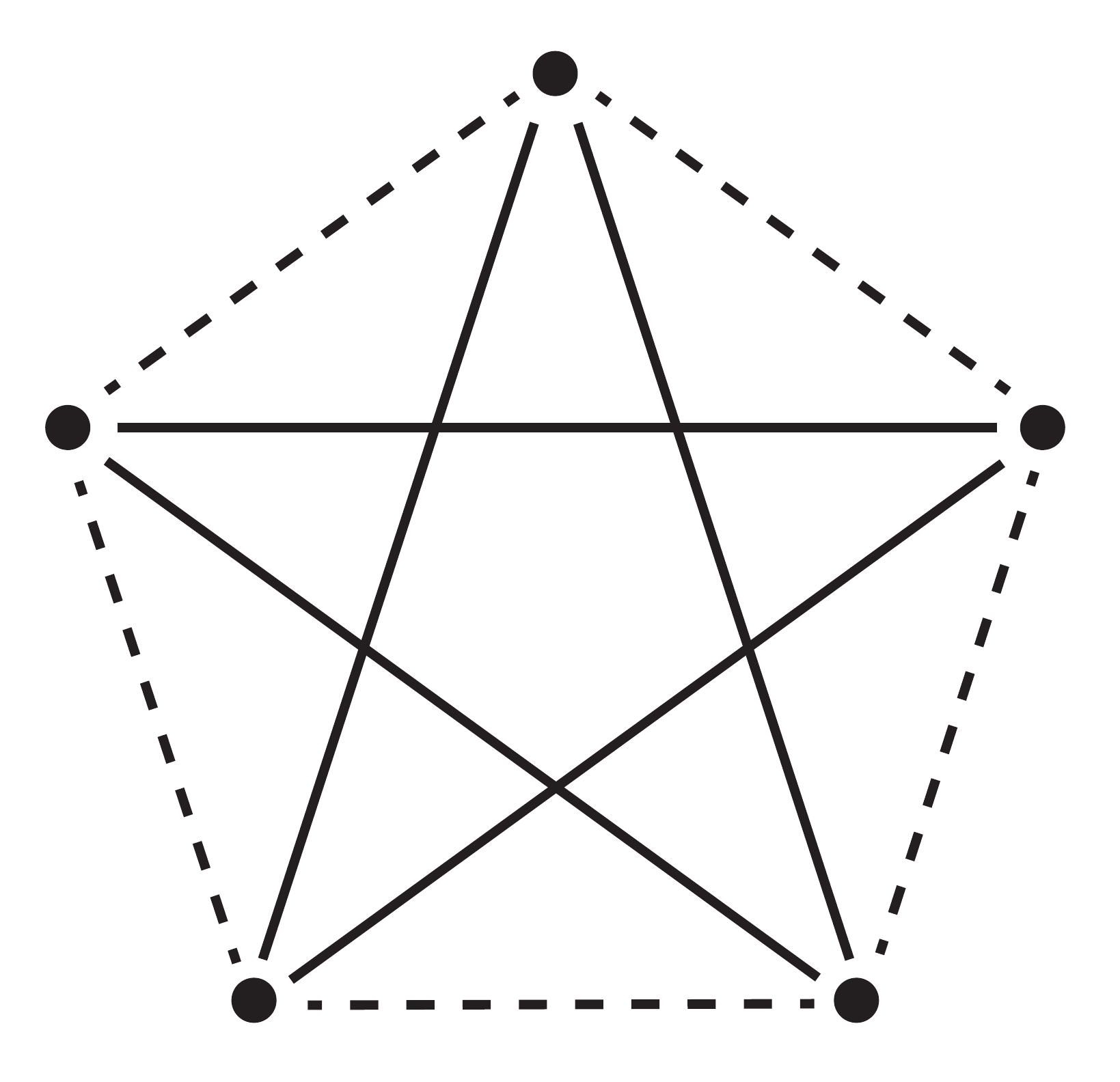}
\caption{Critical coloring of $K_5$} \label{fig:FP}
\end{figure}

  For the upper bound, suppose there exists a 2-coloring of $K_6$ with no blue $S_3$.  Then $\delta_\text{red}\geq 3$.  So consider the red subgraph $G$.  If $G$ has a vertex of degree 5, the existence of a $B_{3,3}$ is immediate.  If $G$ has a vertex of degree 4, then it must have 2 such vertices $u$ and $v$.  If $u\nsim v$, then $G$ must look like Figure \ref{fig:F1}.
\begin{figure}[htb!]
\centering
\includegraphics[width=120pt]{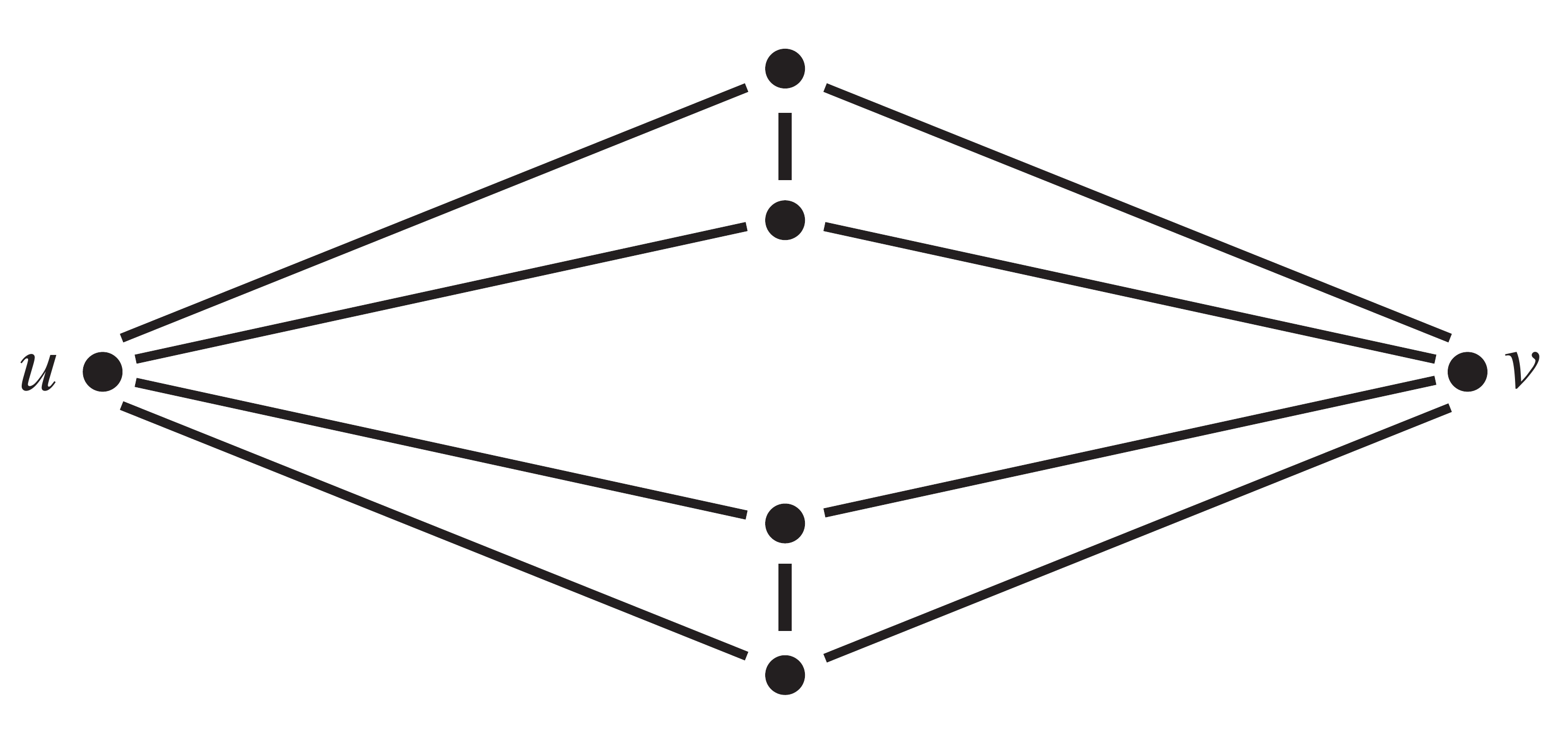}
\caption{Configuration of the red subgraph $G$.} \label{fig:F1}
\end{figure}
  One may use any edge incident to $u$ or $v$ as a spine.

  If $u\sim v$, and $u$ and $v$ do \emph{not} share all three remaining neighbors, then the existence of a $B_{3,3}$ is immediate.  So suppose $u$ and $v$ have neighbors $x,y$, and $z$.  The only way for $G$ to have degree sequence $(4,4,3,3,3,3)$ is for the remaining vertex $w$ to be adjacent to $x,y$, and $z$.  Then we have a $B_{3,3}$ as indicated in Figure \ref{fig:F2}.
\begin{figure}[htb!]
\centering
\includegraphics[width=60pt]{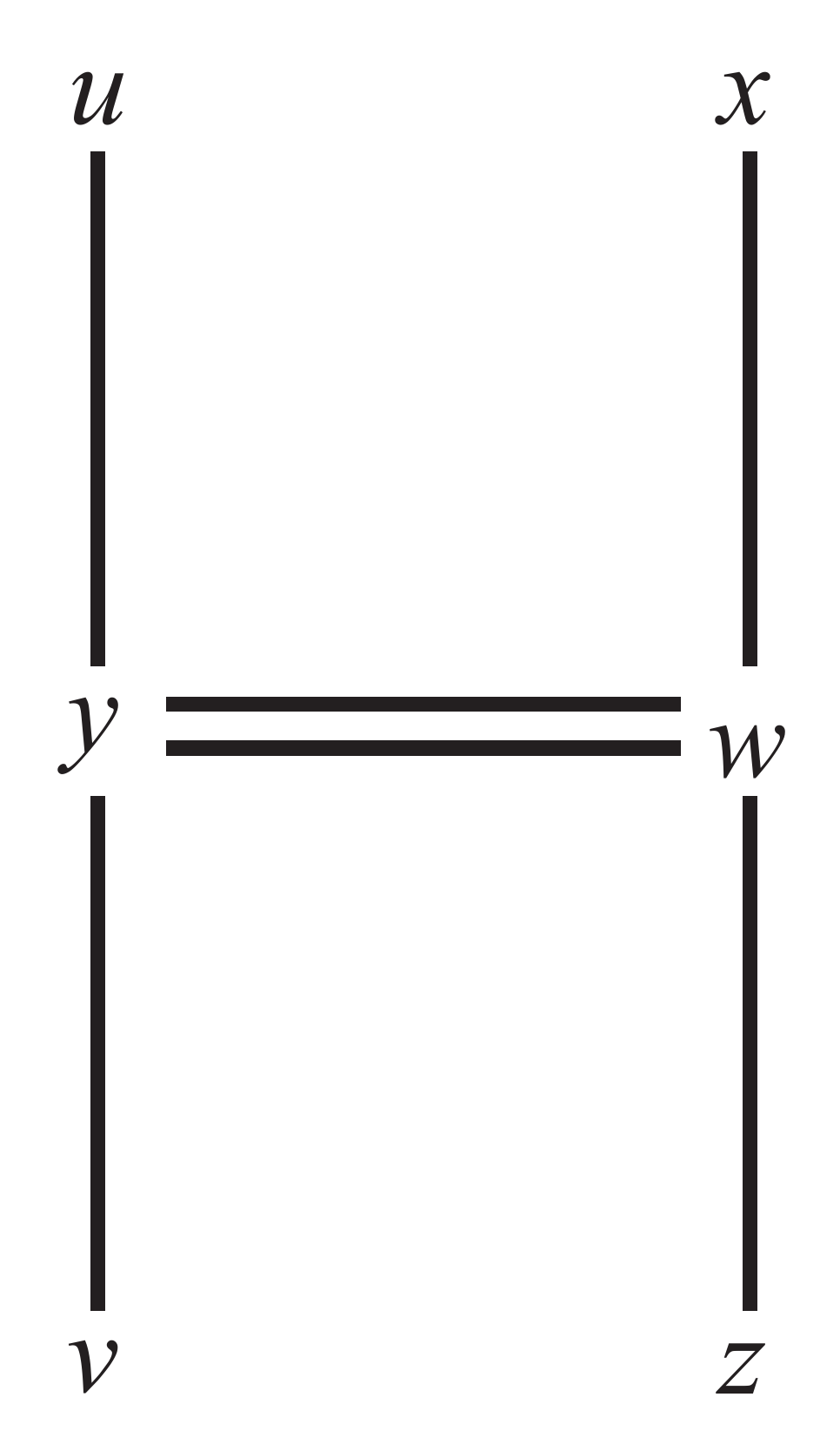}
\caption{A red $B_{3,3}$} \label{fig:F2}
\end{figure}
  Finally, suppose $G$ is 3-regular.  If there is no $B_{3,3}$, then any adjacent vertices share a neighbor.  It is not hard to see that adjacent vertices cannot share \emph{two} neighbors in a 3-regular graph on 6 vertices. If any two adjacent vertices share exactly one neighbor, then   $G$ can be partitioned into edge-disjoint triangles.  But any vertex in such a graph must have even degree, since its degree will be twice the number of triangles in which it participates.  This is a
  contradiction, so there must be some vertices $u$ and $v$ that
  have no common neighbor. But $u$ and $v$ each have degree three,
  immediately yielding a $B_{3,3}$.
\end{proof}

\begin{thm}
  $R(B_{4,4},S_4)=9$.
\end{thm}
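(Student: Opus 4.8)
The plan is to establish $R(B_{4,4},S_4)=9$ by proving matching lower and upper bounds. The lower bound $R(B_{4,4},S_4)>8$ is already furnished by Corollary \ref{cor} with $n=4$, since $R(B_{4,4},S_4)>2\cdot 4=8$; so it suffices to exhibit the upper bound $R(B_{4,4},S_4)\leq 9$. (I would briefly recall the explicit critical coloring from Theorem \ref{LB} to keep the statement self-contained, but the real work is the upper bound.)

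For the upper bound I would take any red/blue $2$-coloring of $K_9$ containing no blue $S_4$, and argue it must contain a red $B_{4,4}$. The hypothesis $\Delta_{\text{blue}}\leq 3$ forces $\delta_{\text{red}}\geq 8-3=5$, so the red subgraph $G$ on $9$ vertices has minimum degree at least $5$. A red $B_{4,4}$ is just a red edge $uv$ that can serve as a spine, i.e.\ where $u$ and $v$ together have at least $6$ red neighbors outside $\{u,v\}$ so that I can allocate $3$ private leaves to each. If some red edge $uv$ fails to be a spine, then $u$ and $v$ share many common neighbors: since $\deg_{\text{red}}(u),\deg_{\text{red}}(v)\geq 5$, each has at least $4$ neighbors besides the other, and failure means $|N(u)\cup N(v)\setminus\{u,v\}|\leq 5$, forcing $|N(u)\cap N(v)|\geq 3$. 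So the key structural claim is: \emph{if $G$ has minimum degree $5$ on $9$ vertices and contains no spine-edge, then every red edge has at least three common neighbors}, and I would push this to a contradiction.

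The main obstacle, as in the $R(B_{3,3},S_3)$ proof, is ruling out the ``locally dense'' configurations in which every adjacent pair shares enough neighbors to block every candidate spine. I expect to argue by a counting/averaging estimate: with $\delta_{\text{red}}\geq 5$ on $9$ vertices, the red subgraph is quite dense (at least $\lceil 45/2\rceil=23$ edges), and requiring every edge to have $\geq 3$ common neighbors imposes a lower bound on the number of triangles that is incompatible with the degree constraints. Concretely, I would count triangles two ways: summing the common-neighbor lower bound over all $\geq 23$ edges against the per-vertex bound $\sum_v \binom{\deg(v)}{2}$ on the number of ``cherries,'' and derive that the forced triangle density cannot be realized. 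Where a pure count leaves a small gap, I would fall back on a short case analysis on the degree sequence (e.g.\ separating the near-regular case $\delta_{\text{red}}=5$ from cases with a vertex of degree $\geq 6$, the latter of which yields a spine almost immediately).

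Finally, I would handle the residual extremal configurations directly, exactly as the authors dispatch the $3$-regular case for $K_6$: a graph in which every edge lies in a triangle but no edge is a spine tends to decompose into triangle-like blocks forcing even local degrees, which clashes with $\delta_{\text{red}}\geq 5$. Pinning down the finite list of such configurations on $9$ vertices and checking each contains a spine is the routine but delicate endgame; once that is cleared, combining $R(B_{4,4},S_4)>8$ with $R(B_{4,4},S_4)\leq 9$ gives the claimed value $9$.
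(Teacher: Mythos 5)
Your setup is right---the lower bound from Theorem \ref{LB}, the reduction to a red graph $G$ on $9$ vertices with $\delta_{\text{red}}\geq 5$, and the observation that a red edge $uv$ fails to be a spine precisely when $|(N(u)\cup N(v))\setminus\{u,v\}|\leq 5$---but the argument you propose to finish from there does not close, and the finishing step is where the entire content of the theorem lies. The triangle/cherry count you describe gives no contradiction: with $\geq 23$ edges each lying in $\geq 3$ triangles you get at least $23$ triangles, while $3T\leq\sum_v\binom{\deg v}{2}$ only caps the number of triangles at about $31$ for the relevant degree sequences. More fundamentally, ``every edge has $\geq 3$ common neighbors'' is only a necessary consequence of failure, not an equivalent reformulation (for instance $K_9$ satisfies it while every edge is a spine), so no argument that merely forces many triangles can succeed; you must use the union condition, not its intersection shadow. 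Your remark that a vertex of degree $\geq 6$ ``yields a spine almost immediately'' is also not right: by parity $G$ \emph{always} has such a vertex (nine vertices cannot all have odd degree $5$), and when its degree is exactly $6$ nothing is immediate---that is precisely the configuration on which the paper spends its entire case analysis (fix $v$ of degree $6$, a neighbor $w$, their $\geq 2$ common neighbors, and chase where the two vertices outside $N(v)\cup\{v\}$ can attach). Deferring the remainder to a ``routine but delicate endgame'' leaves the theorem unproved.

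For what it is worth, a counting argument in the spirit you want does exist, but it must count common \emph{non}-neighbors rather than common neighbors: if no red edge is a spine, then each of the $\geq 23$ red edges $uv$ has at least $7-5=2$ vertices of $V\setminus\{u,v\}$ adjacent to neither endpoint, giving at least $46$ such incidences; but each vertex $x$ has at most $8-5=3$ non-neighbors and hence sits over at most $\binom{3}{2}=3$ such edges, giving at most $27$ incidences---a contradiction. That would replace the paper's case analysis entirely, but it is not the count you wrote down.
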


\begin{proof}
  The lower bound is given by Theorem \ref{LB}.  For the upper bound, suppose a 2-coloring of $K_9$ contains no blue $S_4$. Then $\delta_\text{red}\geq 5$.  Let $G$ be the red subgraph.  Since $G$ has odd order, there must be at least one vertex $v$ of degree $\geq 6$.  Suppose $v\sim w$.  It is easy to see that $v$ and $w$ must have at least two neighbors in common; call them $y$ and $z$.  Now $v$ is adjacent to 3 other vertices; call them $x_1,x_2$, and $x_3$.  There are two remaining vertices $x_4$ and $x_5$.  If $w$ is adjacent to either of them, we are done.  So suppose $w$ is adjacent to $x_1$ and $x_2$.  If either $x_4$ or $x_5$ is adjacent to $v$, we are done, so suppose neither $x_4$ nor $x_5$ is adjacent to $v$ or to $w$.  Then $x_4$ (in order to have degree $\geq$ 5) must be adjacent to $y_1$ or to $y_2$.  Suppose it's $y_1$.  There are two cases:
  \begin{enumerate}
    \item $y_1\sim x_5$.  Then we have a red $B_{4,4}$ as indicated in Figure \ref{fig:F3}.

    \begin{figure}[htb!]
\centering
\includegraphics[width=120pt]{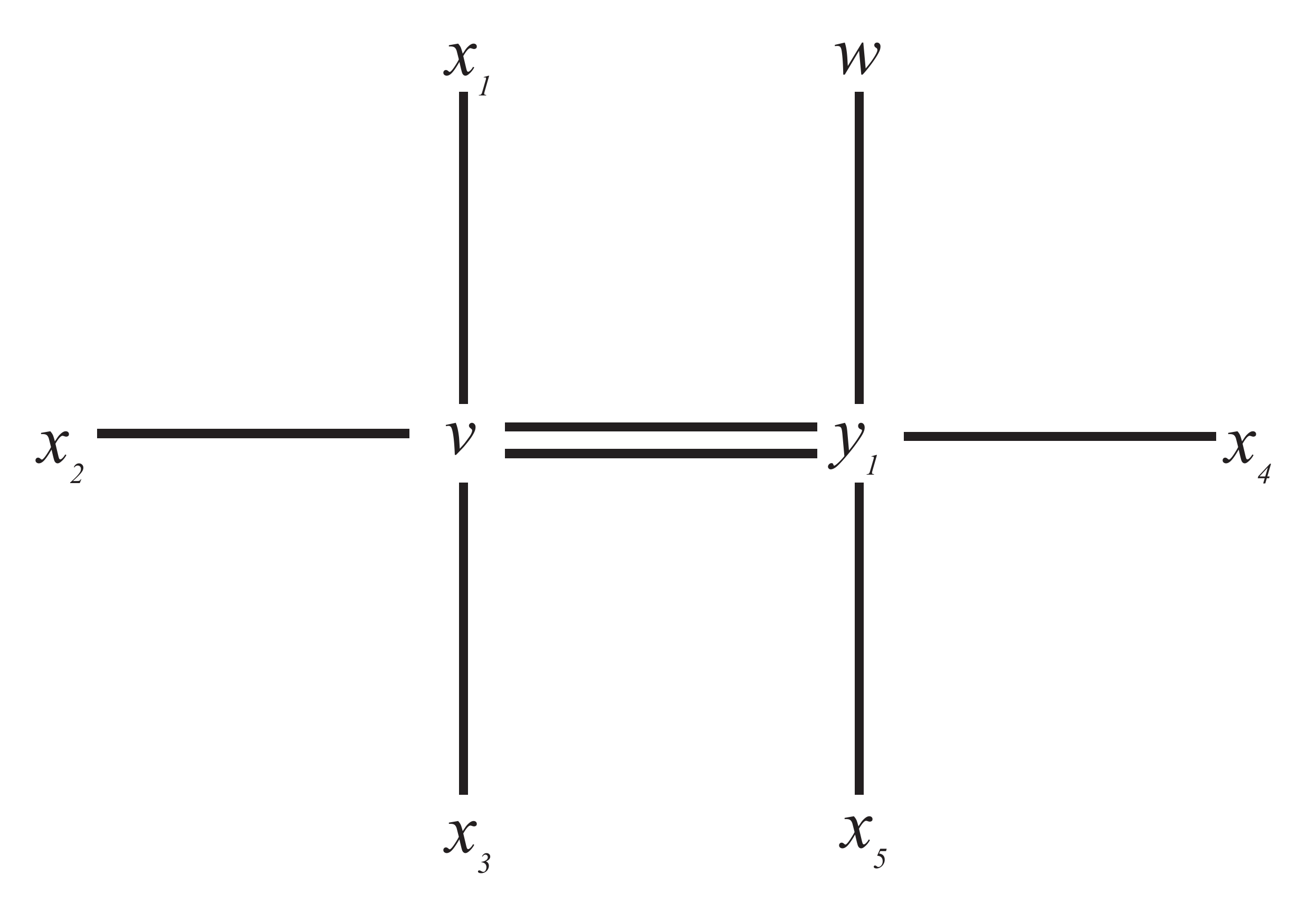}
\caption{A red $B_{4,4}$} \label{fig:F3}
\end{figure}

    \item $y_1\nsim x_5$.  Then, since deg$(y_1)\geq 5$, $y_1\sim x_i$ for some $i\in\{ 1,2,3\}$.  Then we have a red $B_{4,4}$ as indicated in Figure \ref{fig:F4},  where $|\{ i,k,\ell\}|=3$.

    \begin{figure}[htb!]
\centering
\includegraphics[width=120pt]{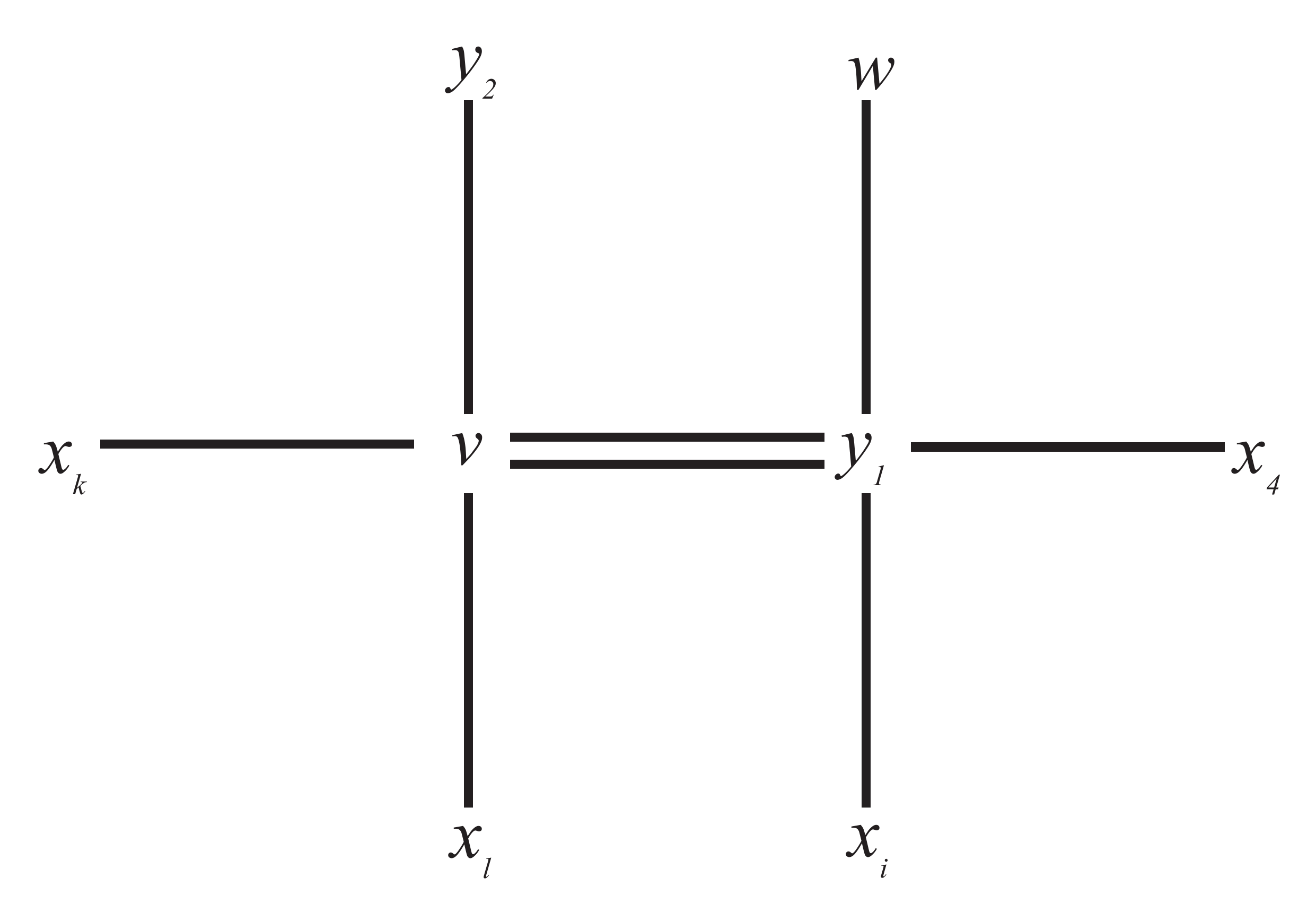}
\caption{A red $B_{4,4}$} \label{fig:F4}
\end{figure}

  \end{enumerate}
  \end{proof}

  Consideration of $R(B_{5,5},S_5)$ leads into rather unpleasant case analysis when trying to reduce the upper bound from that given by Theorem \ref{UB}.

  Now we consider bistars vs. complete graphs.

  \begin{thm}\label{base}
    $R(\bkm,K_3)=2(k+m-1)+1$.
  \end{thm}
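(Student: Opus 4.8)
The plan is to prove the two bounds separately and then combine them. For the \textbf{lower bound}, I would exhibit an explicit $2$-coloring of $K_{2(k+m-1)}$ that avoids both a red $\bkm$ and a blue $K_3$. Partition the $2(k+m-1)$ vertices into two classes $V_1,V_2$, each of size $k+m-1$; color every edge lying inside a single class red and every edge across the two classes blue. The blue graph is then the complete bipartite graph $K_{k+m-1,k+m-1}$, which is triangle-free, so there is no blue $K_3$. The red graph is the disjoint union of two copies of $K_{k+m-1}$. Since $\bkm$ is a connected graph on $k+m$ vertices while each red component spans only $k+m-1$ vertices, no red $\bkm$ can occur. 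This gives $R(\bkm,K_3) > 2(k+m-1)$, i.e.\ $R(\bkm,K_3)\ge 2(k+m-1)+1$.

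For the \textbf{upper bound}, set $N=2(k+m-1)+1$ and consider any $2$-coloring of $K_N$ containing no blue $K_3$; the goal is to force a red $\bkm$. I would split on the maximum blue degree $\Delta_{\text{blue}}$. If some vertex $x$ has at least $k+m$ blue neighbors, then, because the coloring is blue-triangle-free, the blue neighborhood of $x$ is a blue-independent set, hence a red clique of order at least $k+m$. Any edge $uv$ of this clique is then the spine of a red $\bkm$: the remaining (at least) $k+m-2$ clique vertices are red-adjacent to both $u$ and $v$, so we may hand $k-1$ of them to $u$ and a disjoint $m-1$ to $v$. Otherwise $\Delta_{\text{blue}}\le k+m-1$, which says exactly that the coloring contains no blue $S_{k+m}$. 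Since $N = k+m+(k+m)-1$, Theorem~\ref{UB} applied with $n=k+m$ guarantees that $K_N$ contains either a red $\bkm$ or a blue $S_{k+m}$; as the latter is excluded, we again obtain a red $\bkm$. Together with the lower bound this yields $R(\bkm,K_3)=2(k+m-1)+1$.

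The main obstacle, such as it is, lies entirely in the upper bound, and specifically in setting up the right dichotomy rather than in any heavy computation. The natural temptation is to argue directly from a red-degree bound and then re-prove, by hand, that a red edge with large (possibly overlapping) red neighborhoods can be completed to a red $\bkm$. I would instead avoid redoing that leaf-counting by routing the low-blue-degree case through Theorem~\ref{UB} with the parameter choice $n=k+m$, which is precisely the value making $N=k+m+n-1$. The one point deserving care is the high-blue-degree case: I must check that $\Delta_{\text{blue}}\ge k+m$ produces a red clique of order at least $k+m$ (not merely $k+m-1$), since a clique on $k+m-1$ vertices would be too small to host $\bkm$. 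With the threshold placed at $k+m$ the two cases are exhaustive and each delivers the bistar, so no further case analysis is needed.
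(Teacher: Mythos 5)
Your proof is correct and follows essentially the same route as the paper's: the identical two-clique lower bound, and the same dichotomy on whether some vertex has blue degree at least $k+m$ (yielding a red clique of order $k+m$) or not. The only difference is cosmetic: where you invoke Theorem~\ref{UB} with $n=k+m$ to handle the low-blue-degree case, the paper simply repeats the leaf-counting argument inline.
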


  \begin{proof}
    For the lower bound, let $V_1$ and $V_2$ be two red cliques, each of size $k+m-1$, and let every edge between $V_1$ and $V_2$ be colored blue.

    For the upper bound, let $N=2(k+m-1)+1$, and give $K_N$ an edge-coloring in red and blue.  Suppose there is a vertex $v$ with blue degree at least $k+m$.  If any edge in $N_\text{blue}(v)$ is blue, we have a blue triangle.  If not, then $N_\text{blue}(v)$ is a red clique of size at least $k+m$, so it contains a red $\bkm$.

    So then suppose that $\Delta_\text{blue}<k+m$.  It follows that $\delta_\text{red}\geq k+m-1$.  Then every red edge is the spine of some red $\bkm$.  To see this, let $uv$ be colored red.  Both $u$ and $v$ each have at least $k+m-2$ other red neighbors.  Even if these red neighborhoods coincide, there are still $k-1$ red leaves for $u$ and $m-1$ red leaves for $v$.
  \end{proof}

  Now we extend to arbitrary $K_n$.

  \begin{thm} \label{Kn}
    $R(\bkm,K_n)=(k+m-1)(n-1)+1$.
  \end{thm}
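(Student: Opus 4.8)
The plan is to observe that $\bkm$ is a tree on $k+m$ vertices, so the claimed value $(k+m-1)(n-1)+1$ is exactly the Chv\'atal-type quantity $R(T,K_n)=(|V(T)|-1)(n-1)+1$ for a tree $T$; I would prove it directly by the standard two-part argument, using Theorem \ref{base} to seed an induction on $n$. The lower bound comes from an explicit coloring and the upper bound from an inductive reduction that peels off one color class.

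For the lower bound I would exhibit a coloring of $K_N$ on $N=(k+m-1)(n-1)$ vertices with no red $\bkm$ and no blue $K_n$. Partition the vertices into $n-1$ blocks of size $k+m-1$, color every edge inside a block red and every edge between blocks blue. A blue clique meets each block in at most one vertex, so the largest blue clique has order $n-1$ and there is no blue $K_n$. Each red component is a $K_{k+m-1}$, which has too few vertices to contain the connected $(k+m)$-vertex graph $\bkm$, so there is no red $\bkm$. Hence $R(\bkm,K_n)>(k+m-1)(n-1)$.

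For the upper bound I would induct on $n$, taking $n=2$ as an immediate base case (and noting that $n=3$ is Theorem \ref{base}). Assume the formula for $n-1$, set $N=(k+m-1)(n-1)+1$, color $K_N$ in red and blue, and suppose there is no blue $K_n$; the goal is to produce a red $\bkm$. If $\delta_\text{red}(K_N)\geq k+m-1$, then every red edge is the spine of a red $\bkm$: for a red edge $uv$ the endpoints together have at least $k+m-2$ red neighbors besides each other, so even when their red neighborhoods coincide one may select $k-1$ red leaves for $u$ and $m-1$ for $v$, exactly the embedding argument already used in Theorems \ref{UB} and \ref{base}. Since $\delta_\text{red}\geq 1$ there is a red edge, and we are done. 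Otherwise some vertex $v$ has $\text{deg}_\text{red}(v)\leq k+m-2$, whence
\[
  \text{deg}_\text{blue}(v)\geq (N-1)-(k+m-2)=(k+m-1)(n-2)+1.
\]
By the induction hypothesis this last quantity equals $R(\bkm,K_{n-1})$, so the induced coloring on $W=N_\text{blue}(v)$ contains either a red $\bkm$ (and we finish) or a blue $K_{n-1}$; in the latter case adjoining $v$, which is joined to all of $W$ in blue, yields a blue $K_n$, contradicting our assumption. This closes the induction.

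The step I expect to require the most care is the degree bookkeeping in the inductive reduction: the entire argument hinges on the low-red-degree vertex having blue degree meeting the threshold $R(\bkm,K_{n-1})=(k+m-1)(n-2)+1$ exactly, which is precisely what pins the value at $(k+m-1)(n-1)+1$ rather than anything larger or smaller. The embedding claim and the lower-bound construction are routine, and both mirror arguments already established earlier in the paper.
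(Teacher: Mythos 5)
Your proposal is correct and follows essentially the same argument as the paper: the identical lower-bound construction ($n-1$ red cliques of order $k+m-1$ with blue edges between them, forming a Tur\'an graph in blue), and the identical inductive upper bound via the dichotomy $\delta_\text{red}\geq k+m-1$ (every red edge is a spine) versus a low-red-degree vertex whose blue neighborhood has order at least $R(\bkm,K_{n-1})$. The only cosmetic difference is that you allow $n=2$ as a base case while the paper starts from $n=3$ via Theorem \ref{base}; both work.
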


  \begin{proof}
    We proceed by induction on $n$.  Theorem \ref{base} provides the base case $n=3$.

    So assume $n>3$, and let $R(\bkm,K_{n-1})\leq(k+m-1)(n-2)+1$.  Let $N=(k+m-1)(n-1)+1$, and consider any edge-coloring of $K_n$ in red and blue.  If $\delta_\text{red}\geq k+m-1$, then every red edge is the spine of a red $\bkm$, so suppose $\delta_\text{red}\leq k+m-2$.  Then there is a vertex $v$ with blue degree at least $(k+m-1)(n-2)+1$.  By the induction hypothesis, the subgraph induced by $N_\text{blue}(v)$ contains either a red $\bkm$ or a blue $K_{n-1}$.  In the latter case, the blue $K_{n-1}$ along with $v$ forms a blue $K_n$.

    For the lower bound, let $V_1,\ldots,V_{n-1}$ be vertex-disjoint red cliques, each of size $k+m-1$.  Color all edges among the $V_i$'s blue.  Clearly there are no red $\bkm$'s.  Since the blue subgraph forms a Turan graph, there are no blue $K_n$'s.
  \end{proof}

\section{Mixed Multi-color Ramsey Numbers}\label{S3}

In \cite{Boza10}, the authors determine
$R(S_{k_1},\ldots,S_{k_i},k_{n_i},\ldots,K_{n_\ell})$ exactly as a
function of $R(K_{n_1},\ldots,K_{n_\ell})$.    In \cite{Omidi11},
Omidi and Raeisi give a shorter proof of this result via the
following lemma, whose proof is straight from The Book.

\begin{lem}\label{lem}
  Let $G_1,\ldots,G_m$ be connected graphs, let $r=R(G_1,\ldots,G_m)$ and $r'=R(K_{n_1},\ldots,K_{n_\ell})$.  If $n\geq 2$ and $R(G_1,\ldots,G_m,K_n)=(r-1)(n-1)+1$, then $R(G_1,\ldots,G_m,K_{n_1},\ldots,K_{n_\ell})=(r-1)(r'-1)+1$.
\end{lem}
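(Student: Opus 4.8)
The plan is to prove matching bounds: a blow-up construction for the lower bound and a color-merging reduction for the upper bound. For the lower bound I would start from two critical colorings. Since $r=R(G_1,\ldots,G_m)$, there is a coloring $c_1$ of $K_{r-1}$ in the first $m$ colors with no monochromatic $G_i$ in color $i$; since $r'=R(K_{n_1},\ldots,K_{n_\ell})$, there is a coloring $c_2$ of $K_{r'-1}$ in the remaining $\ell$ colors with no monochromatic $K_{n_j}$ in color $j$. I would then take $(r-1)(r'-1)$ vertices, partition them into $r'-1$ groups of size $r-1$, color each group internally by a copy of $c_1$, and color every edge between two groups with the single color that $c_2$ assigns to the corresponding pair of vertices of $K_{r'-1}$.

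The verification is where connectedness does its work. The first $m$ colors occur only inside groups, so any monochromatic copy of a \emph{connected} graph $G_i$ in color $i$ must lie within one group, where $c_1$ forbids it. The $\ell$ second colors occur only between groups, so a monochromatic $K_{n_j}$ in a second color $j$ meets each group in at most one vertex and hence projects onto a monochromatic $K_{n_j}$ in $c_2$, which is likewise forbidden. This shows $R(G_1,\ldots,G_m,K_{n_1},\ldots,K_{n_\ell})>(r-1)(r'-1)$. For the upper bound, set $N=(r-1)(r'-1)+1$ and take any $(m+\ell)$-coloring of $K_N$. I would merge the $\ell$ complete-graph colors into one new color, \emph{blue}, leaving the $m$ connected-graph colors alone, obtaining an $(m+1)$-coloring. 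Invoking the hypothesis at $n=r'$ (legitimate since $r'\ge 2$) gives $R(G_1,\ldots,G_m,K_{r'})=(r-1)(r'-1)+1=N$, so this $(m+1)$-coloring contains either a monochromatic $G_i$ in some first color, in which case we are done, or a blue $K_{r'}$. In the latter case the edges of that $K_{r'}$ carry only the original $\ell$ colors, so the definition of $r'$ forces a monochromatic $K_{n_j}$ in one of them; hence $R(G_1,\ldots,G_m,K_{n_1},\ldots,K_{n_\ell})\le(r-1)(r'-1)+1$.

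The step I expect to demand the most care is the merge-and-unmerge maneuver in the upper bound: one must be sure to apply the hypothesis at exactly $n=r'$ (not at an individual $n_j$), since it is the Ramsey number $r'$ of the \emph{collection} of complete graphs, rather than any single $n_j$, that makes a monochromatic blue clique of size $r'$ recover one of the target $K_{n_j}$'s. The lower bound is comparatively routine, but its indispensable ingredient is the connectedness hypothesis on the $G_i$, which is precisely what confines a monochromatic $G_i$ to a single group; dropping connectedness would break the construction.
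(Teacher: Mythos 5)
Your proposal is correct and follows essentially the same route as the paper: the blow-up of a critical $\ell$-coloring of $K_{r'-1}$ by critical $m$-colorings of $K_{r-1}$ for the lower bound, and the merge-into-one-color argument applied at $n=r'$ for the upper bound. Your verification is in fact slightly more explicit than the paper's, in particular in pinpointing that connectedness of the $G_i$ is what confines a monochromatic $G_i$ to a single block of the blow-up.
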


\begin{proof}
  Let $R=R(G_1,\ldots,G_m,K_{n_1},\ldots,K_{n_\ell})$.  For the lower bound, give $K_{r'-1}$ an edge-coloring in $\ell$ colors $\beta_1,\ldots,\beta_\ell$ that has no copy of $K_{n_i}$ in color $\beta_i$.  Replace each vertex of $K_{r'-1}$ by a complete graph of order $r-1$ whose edges are colored by colors $\alpha_1,\ldots,\alpha_m$ so that no copy of $G_i$ appears in color $\alpha_i$.  Each edge in the original graph $K_{r'-1}$ expands to a copy of $K_{r-1,r-1}$, with each edge the same color as the original edge.  This shows that $R>(r-1)(r'-1)$.

  For the upper bound, let $N=(r-1)(r'-1)+1$, and color the edges of $K_N$ in colors $\alpha_1,\ldots,\alpha_m,\beta_1,\ldots,\beta_\ell$.  Recolor the edges colored $\beta_1,\ldots,\beta_\ell$ with a new color $\alpha$.  Since $R(G_1,\ldots,G_m,K_{r'})= (r'-1)(r-1)+1=N$, $K_N$ contains a copy of $G_i$ in color $\alpha_i$ or a copy of $K_{r'}$ in color $\alpha$.  In the former case we are done, so assume the latter obtains.  Then consider the clique $K_{r'}$ which is colored $\alpha$.  Return to the original coloring in colors $\beta_1,\ldots,\beta_\ell$. Since $R(K_{n_1},\ldots,K_{n_\ell})=r'$, some color class $\beta_i$ contains a copy of $K_{n_i}$.  This concludes the proof.
\end{proof}

We will now make use of  Lemma \ref{lem} to determine
$R(\bkm,K_{n_1},\ldots,K_{n_\ell})$ as a function of
$R(K_{n_1},\ldots,K_{n_\ell})$.

\begin{thm}
  $R(\bkm,K_{n_1},\ldots,K_{n_\ell})=(k+m-1)[R(K_{n_1},\ldots,K_{n_\ell})-1]+1$.
\end{thm}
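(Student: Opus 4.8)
The plan is to apply Lemma~\ref{lem} directly, using the results already established in Section~\ref{S2} to verify its hypotheses. The lemma is tailor-made for exactly this kind of reduction: it lets us ``absorb'' the complete-graph colors $K_{n_1},\ldots,K_{n_\ell}$ into a single complete-graph color, provided we know the two-color mixed number $R(\bkm,K_n)$ has the right linear form. So the strategy is to set $m=1$ in the lemma (taking $G_1=\bkm$ as the single connected graph), whence $r=R(G_1)=R(\bkm)$, and then feed in Theorem~\ref{Kn}.

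First I would record that a single bistar $\bkm$ has Ramsey number $R(\bkm)=k+m$, since the smallest $N$ forcing a monochromatic $\bkm$ in a one-coloring of $K_N$ is simply $N=k+m$ (the bistar has $k+m$ vertices). Thus $r-1=k+m-1$. Next I would invoke Theorem~\ref{Kn}, which states $R(\bkm,K_n)=(k+m-1)(n-1)+1=(r-1)(n-1)+1$; this is precisely the hypothesis $R(G_1,\ldots,G_m,K_n)=(r-1)(n-1)+1$ required by Lemma~\ref{lem}. With both hypotheses confirmed, the lemma's conclusion gives
\[
  R(\bkm,K_{n_1},\ldots,K_{n_\ell})=(r-1)(r'-1)+1=(k+m-1)\bigl[R(K_{n_1},\ldots,K_{n_\ell})-1\bigr]+1,
\]
where $r'=R(K_{n_1},\ldots,K_{n_\ell})$, which is exactly the claimed formula.

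The only genuine content beyond citing the lemma is checking that its hypotheses genuinely hold in our setting, so I would make the identifications $G_1=\bkm$, $r=k+m$, and $r'=R(K_{n_1},\ldots,K_{n_\ell})$ fully explicit. One mild point to be careful about is the connectedness requirement in Lemma~\ref{lem}: I would note that $\bkm$ is a tree and hence connected, so it qualifies as one of the $G_i$. I do not anticipate a serious obstacle here; the difficulty has already been discharged in proving Theorem~\ref{Kn}, and this theorem is essentially a one-line corollary of that result together with the Omidi--Raeisi lemma. The main thing to get right is simply the bookkeeping of which parameter in the lemma corresponds to which quantity, ensuring $r-1$ matches $k+m-1$ throughout.
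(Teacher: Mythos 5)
Your proposal is correct and takes essentially the same route as the paper: both arguments verify the hypothesis of Lemma~\ref{lem} via Theorem~\ref{Kn} with $r=k+m$ and read off the conclusion. The only (cosmetic) difference is that you instantiate the lemma with the single graph $G_1=\bkm$, using the degenerate one-color value $R(\bkm)=k+m$, whereas the paper pads the list with a $K_2$ so that $r=R(\bkm,K_2)=k+m$ is a genuine two-color Ramsey number; the two instantiations are interchangeable.
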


\begin{proof}
  From Theorem \ref{Kn} we have that $R(\bkm, K_n)=(k+m-1)(n-1)+1$.  Note that $R(\bkm, K_2)=k+m$, so that

  \begin{align*}
    R(\bkm,K_2,K_n) &=R(\bkm,K_n)\\
    &=[R(\bkm,K_2)-1](n-1)+1.
  \end{align*}
  Hence we may apply Lemma \ref{lem} to get $R(\bkm,K_{n_1},\ldots,K_{n_\ell})=(k+m-1)[R(K_{n_1},\ldots,K_{n_\ell})-1]+1$.
\end{proof}

The authors are unsure whether a similar result can be proved for
multiple bistars; we leave this as an open problem.

\section{Bipartite Ramsey Numbers}\label{S4}

Let $G_1$ and $G_2$ be bipartite graphs.  Then $BR(G_1,G_2)$ is the
least integer $N$ so that any 2-coloring of the edges of $K_{N,N}$
contains either a red $G_1$ or a blue $G_2$.  In \cite{Hattingh13},
Hattingh and Joubert determine the bipartite Ramsey number for
certain bistars:

\begin{thm}\label{bi}
  Let $k,n\geq 2$.  Then $BR(B_{k,k},B_{n,n})=k+n-1$.
\end{thm}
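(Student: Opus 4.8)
The plan is to translate the two bistar-containment conditions into simple degree conditions, run a short structure-plus-counting argument at the critical size $N=k+n-1$ for the upper bound, and produce an explicit regular coloring for the lower bound.

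First I would record the pivotal reformulation. In $K_{N,N}$ with parts $A,B$, the spine $vw$ of any monochromatic bistar must cross from $A$ to $B$; the $k-1$ leaves at $v$ then lie in $B$ and the $k-1$ leaves at $w$ lie in $A$, so (being in opposite parts) the two leaf sets are automatically disjoint from each other and from the spine vertices. Hence a red $B_{k,k}$ occurs \emph{if and only if} some red edge has both endpoints of red degree at least $k$, and a blue $B_{n,n}$ occurs if and only if some blue edge has both endpoints of blue degree at least $n$. Call a vertex \emph{red-heavy} if $\deg_{\text{red}}\ge k$ and \emph{blue-heavy} if $\deg_{\text{blue}}\ge n$.

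For the upper bound I would put $N=k+n-1$ and assume a coloring with no red $B_{k,k}$ and no blue $B_{n,n}$. Every vertex has total degree $N=k+n-1$, so $\deg_{\text{red}}(u)+\deg_{\text{blue}}(u)=k+n-1$ forces each $u$ to be \emph{exactly one} of red-heavy or blue-heavy (both would need degree $\ge k+n$, neither would need degree $\le k+n-2$). The avoidance hypotheses then say precisely that every red neighbor of a red-heavy vertex is blue-heavy and every blue neighbor of a blue-heavy vertex is red-heavy. From this I would show each part is uniform in type: a red-heavy vertex in $A$ has its $\ge k$ red neighbors among the blue-heavy vertices of $B$, forcing at most $n-1$ red-heavy vertices in $B$, while a blue-heavy vertex in $A$ forces at least $n$ red-heavy vertices in $B$; these are incompatible, so $A$ (and likewise $B$) is entirely one type. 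The four resulting cases collapse: if both parts are red-heavy (or both blue-heavy) a red-heavy (resp.\ blue-heavy) vertex has no admissible same-color neighbor, giving red (resp.\ blue) degree $0<k$ (resp.\ $<n$), a contradiction using $k,n\ge 2$; and in the two mixed cases, summing degrees yields at least $kN$ red edges and at least $nN$ blue edges, so $N^2\ge (k+n)N$, i.e.\ $N\ge k+n$, contradicting $N=k+n-1$.

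For the lower bound I would color $K_{M,M}$ with $M=k+n-2$ by letting the red graph be any $(k-1)$-regular spanning subgraph. This exists because $K_{M,M}$ is $M$-regular bipartite, hence $1$-factorable by König's theorem, and $0\le k-1\le M$ when $k,n\ge 2$, so one may take the union of $k-1$ perfect matchings. Then every red degree is $k-1<k$ and every blue degree is $(k+n-2)-(k-1)=n-1<n$, so no vertex is red-heavy and none is blue-heavy; by the reformulation neither bistar can appear, giving $BR(B_{k,k},B_{n,n})>k+n-2$ and matching the upper bound. I expect the main obstacle to be the upper-bound structure argument: the two hypotheses interact only through the degree identity, and one must carefully combine the ``neighbors of heavy vertices'' constraint with the competing counts $|B_{\text{red-heavy}}|\le n-1$ versus $\ge n$ to force each part uniform \emph{before} the clean counting contradiction $N\ge k+n$ is available. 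The likeliest place to slip is forgetting that the two monochromatic-type cases fail by the neighbor-structure argument rather than the counting one.
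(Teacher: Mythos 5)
Your proof is correct, but there is an important mismatch of expectations here: the paper does not prove Theorem~\ref{bi} at all. It is quoted verbatim from Hattingh and Joubert \cite{Hattingh13}, and the paper only supplies its own argument for the slight generalization $BR(B_{k,m},B_{n,\ell})=k+n-1$, where the upper bound is simply inherited from Theorem~\ref{bi} and the lower bound is an explicit cyclic coloring making the red subgraph $(k-1)$-regular. So what you have produced is a self-contained proof of a result the paper treats as a black box. Your lower bound is essentially identical in spirit to the construction the paper does give (any $(k-1)$-regular red spanning subgraph of $K_{k+n-2,k+n-2}$ works; the paper realizes one via residues mod $N$, you via K\H{o}nig 1-factorization). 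Your upper bound is the genuinely new content relative to this paper, and it checks out: the reformulation of ``contains a monochromatic bistar'' as ``some edge has both endpoints heavy in that color'' is valid precisely because the two leaf sets land in opposite parts; the degree identity at $N=k+n-1$ correctly forces each vertex to be exactly one of red-heavy or blue-heavy; the two counting bounds on the number of red-heavy vertices of $B$ ($\le n-1$ versus $\ge n$) correctly force each part to be uniform; and the four cases are disposed of as you describe, with the monochromatic-type cases killed by the neighbor constraint and the mixed cases by $N^2\ge(k+n)N$. As a bonus, your argument immediately yields the paper's generalization as well, since avoiding a red $B_{k,m}$ with $k\ge m$ implies avoiding a red $B_{k,k}$, so only the larger spine degrees $k$ and $n$ enter the upper-bound argument.
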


We generalize this result slightly.

\begin{thm}
  Let $k\geq m\geq 2$, $n\geq\ell\geq 2$.  Then $BR(\bkm,B_{n,\ell})=k+n-1$.
\end{thm}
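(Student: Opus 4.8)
The plan is to prove matching upper and lower bounds, both of which rest on one observation: a red $\bkm$ forces some vertex to have red degree at least $k$, and a blue $B_{n,\ell}$ forces some vertex to have blue degree at least $n$, irrespective of the smaller parameters $m$ and $\ell$. The hypotheses $m\geq 2$ and $\ell\geq 2$ therefore play no role beyond guaranteeing that the two graphs are genuine bistars; the entire problem should reduce to controlling maximum red and blue degrees.

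For the upper bound I would invoke monotonicity of $BR$ under passage to subgraphs. Since $m\leq k$ and $\ell\leq n$, we have $\bkm\subseteq B_{k,k}$ and $B_{n,\ell}\subseteq B_{n,n}$: map each spine onto the corresponding spine and send the $m-1$ (resp.\ $\ell-1$) short leaves into the available $k-1$ (resp.\ $n-1$) leaves. Consequently any $2$-coloring of $K_{N,N}$ that contains a red $B_{k,k}$ or a blue $B_{n,n}$ already contains a red $\bkm$ or a blue $B_{n,\ell}$, so $BR(\bkm,B_{n,\ell})\leq BR(B_{k,k},B_{n,n})$. By Theorem \ref{bi} the right-hand side equals $k+n-1$, which gives the desired upper bound.

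For the lower bound I would exhibit a $2$-coloring of $K_{N,N}$ with $N=k+n-2$ having neither a red $\bkm$ nor a blue $B_{n,\ell}$. The key is to make the red subgraph $(k-1)$-regular. Each vertex of $K_{k+n-2,k+n-2}$ has degree $k+n-2=(k-1)+(n-1)$, so a $(k-1)$-regular red graph automatically forces every blue degree to be exactly $n-1$. A $(k-1)$-regular spanning bipartite subgraph of $K_{k+n-2,k+n-2}$ exists whenever $k-1\leq k+n-2$ (which holds since $n\geq 1$), for instance as a union of $k-1$ edge-disjoint perfect matchings, or as the circulant joining $x_i$ to $y_i,\dots,y_{i+k-2}$. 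In this coloring $\Delta_{\text{red}}=k-1<k$ and $\Delta_{\text{blue}}=n-1<n$, so no vertex can serve as the degree-$k$ endpoint of a red $\bkm$ nor as the degree-$n$ endpoint of a blue $B_{n,\ell}$; both are therefore absent, yielding $BR(\bkm,B_{n,\ell})>k+n-2$. Together with the upper bound this pins the value at exactly $k+n-1$.

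I do not expect a serious obstacle here: the content is essentially the lower-bound construction, and even that is forced to be regular once one notices that $\Delta_{\text{red}}\leq k-1$ and $\Delta_{\text{blue}}\leq n-1$ must hold with equality at every vertex. The only point requiring genuine care is the claim that bounding the maximum degrees below $k$ and $n$ really does preclude the respective bistars; this is immediate once one recalls that every copy of $\bkm$ contains a vertex of degree $k$. The one external fact worth citing cleanly is the existence of a $(k-1)$-regular spanning bipartite subgraph, which follows from the standard factorization of regular bipartite graphs into perfect matchings.
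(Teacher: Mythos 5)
Your proof is correct and follows essentially the same route as the paper: the upper bound comes from Theorem \ref{bi} via the containments $\bkm\subseteq B_{k,k}$ and $B_{n,\ell}\subseteq B_{n,n}$, and the lower bound is the same $(k-1)$-regular circulant coloring of $K_{k+n-2,k+n-2}$, with the red and blue bistars excluded by the degree bounds $\Delta_{\text{red}}=k-1$ and $\Delta_{\text{blue}}=n-1$.
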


\begin{proof}
  The upper bound follows immediately from Theorem \ref{bi}.  The lower bound construction given in Theorem 1 of Hattingh-Joubert for $BR(B_{s,s},B_{t,t})$ does not work for us.  We need this construction:  Let $L$ and $R$ be the partite sets, and let $N=k+n-2=(k-1)+(n-1)$.  Let $L=\{ v_0,v_1,\ldots,v_{N-1}\}$ and $R=\{w_0,w_1,\ldots,w_{N-1}\}$.  Color $v_iw_j$ \emph{red} if $(i-j)\mod N\in\{ 0,1,\ldots,k-2\}$, and \emph{blue} if $(i-j)\mod N\in\{k-1,\ldots,N-1\}$.  Then the red subgraph is $(k-1)$-regular, hence no red $\bkm$, and the blue subgraph is $(n-1)$-regular, hence no blue $B_{n,\ell}$.
\end{proof}

\begin{cor}
  Let $T_m$ (resp., $T_n$) be a tree of diameter at most 3 with maximum degree $m$ (resp., $n$).  Then $BR(T_m,T_n)=m+n-1$.
\end{cor}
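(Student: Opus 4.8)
The plan is to reduce the corollary to the bistar case, exploiting two structural facts about trees of diameter at most $3$. First I would record the classification: a tree with maximum degree $m\ge 2$ and diameter at most $3$ is either the star $S_m$ (diameter $2$) or a bistar $B_{a,b}$ with $\max\{a,b\}=m$ (diameter $3$). In either case $T_m$ is a subgraph of $B_{m,m}$, since $S_m$ sits inside $B_{m,m}$ as the star centered at one spine vertex, and $B_{a,b}\subseteq B_{m,m}$ whenever $a,b\le m$. Moreover, by the very definition of maximum degree, $T_m$ contains a vertex of degree exactly $m$. The same statements hold for $T_n$ relative to $B_{n,n}$.

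For the upper bound I would invoke monotonicity of the bipartite Ramsey number under subgraph containment: if $G_1\subseteq H_1$ and $G_2\subseteq H_2$, then $BR(G_1,G_2)\le BR(H_1,H_2)$, because at $N=BR(H_1,H_2)$ every $2$-coloring of $K_{N,N}$ already produces a red $H_1$ or a blue $H_2$, hence a red $G_1$ or a blue $G_2$. Applying this with $H_1=B_{m,m}$ and $H_2=B_{n,n}$, and using $BR(B_{m,m},B_{n,n})=m+n-1$ from Theorem \ref{bi}, yields $BR(T_m,T_n)\le m+n-1$.

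For the lower bound I would reuse verbatim the cyclic coloring from the proof just given for the preceding theorem: set $N=m+n-2=(m-1)+(n-1)$, take partite sets $L=\{v_0,\ldots,v_{N-1}\}$ and $R=\{w_0,\ldots,w_{N-1}\}$, and color $v_iw_j$ red exactly when $(i-j)\bmod N\in\{0,\ldots,m-2\}$. This makes the red subgraph $(m-1)$-regular and the blue subgraph $(n-1)$-regular. The key observation—and the only place the argument departs from the bistar case—is that the obstruction is purely a degree condition: since $T_m$ has a vertex of degree $m$, any red copy of $T_m$ would force a vertex of red-degree at least $m$, contradicting $(m-1)$-regularity, and symmetrically there is no blue $T_n$. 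Hence $BR(T_m,T_n)>m+n-2$, completing the proof.

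I do not expect a genuine obstacle: essentially all the work lives in Theorem \ref{bi} and the construction above, and the corollary follows because diameter-$3$ trees are \emph{exactly} the subgraphs of bistars, while the lower-bound coloring forbids a guest graph through its maximum degree alone. The one point requiring a little care is the degenerate range $m=2$ or $n=2$, where $T_m$ may be a short path rather than a bona fide bistar; here one simply checks, via the classification above, that $T_m$ still has a degree-$m$ vertex and still embeds in $B_{m,m}$, so both bounds go through unchanged.
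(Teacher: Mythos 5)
Your proof is correct and is exactly the argument the paper intends (it states the corollary without proof as an immediate consequence of the preceding theorem): the upper bound via embedding $T_m$ in a bistar and invoking Theorem \ref{bi}, and the lower bound by observing that the $(m-1)$-regular red / $(n-1)$-regular blue cyclic coloring excludes any guest with a vertex of degree $m$ (resp.\ $n$). Your explicit handling of the star case and of $m=2$ or $n=2$ is a sensible bit of added care, but it is the same approach.
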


Hattingh and Joubert also prove the following $k$-color upper bound.

\begin{thm}
For $k\geq 2$ and $m\geq 3$, we have

   $$BR_k(B_{m,m})=BR(B_{m,m},\ldots,B_{m,m})\leq \left\lceil k(m-1) +\sqrt{(m-1)^2(k^2-k)-k(2m-4)}\right\rceil$$
\end{thm}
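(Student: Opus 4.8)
The plan is to reduce the presence of a monochromatic $B_{m,m}$ to a local degree condition and then count edges across all $k$ colors. Fix a $k$-coloring of $K_{N,N}$ with parts $L$ and $R$. For a color $c$, call a vertex \emph{$c$-big} if its degree in color $c$ is at least $m$, and \emph{$c$-small} otherwise. The first step is the observation that a $c$-colored edge $vw$ (with $v\in L$ and $w\in R$) is the spine of a monochromatic $B_{m,m}$ exactly when $v$ and $w$ are both $c$-big: the $m-1$ leaves needed at $v$ lie in $R$ and the $m-1$ leaves needed at $w$ lie in $L$, so they cannot collide, and only the two degree conditions matter. Consequently, if the coloring contains no monochromatic $B_{m,m}$, then in each color $c$ there is no $c$-edge joining two $c$-big vertices.

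Next I would estimate the number $e_c$ of edges in color $c$. Let $a_c$ and $b_c$ be the numbers of $c$-big vertices in $L$ and in $R$ respectively, and put $s_c=a_c+b_c$. Writing $e_c=\sum_{v\in L}\deg_c(v)$ and splitting according to whether $v$ is $c$-big, the $c$-small vertices of $L$ contribute at most $(m-1)(N-a_c)$. Every $c$-edge leaving a $c$-big vertex of $L$ must land on a $c$-small vertex of $R$, since big--big $c$-edges are forbidden; hence the $c$-big vertices of $L$ contribute at most the total $c$-degree of the $c$-small vertices of $R$, which is at most $(m-1)(N-b_c)$. This yields the per-color estimate
\[
 e_c\le (m-1)\bigl(2N-s_c\bigr).
\]

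Summing over the $k$ colors and using $\sum_c e_c=N^2$ gives $N^2\le (m-1)\bigl(2kN-\sum_c s_c\bigr)$. The last ingredient is a lower bound on $\sum_c s_c=\sum_v \#\{\,c : v\text{ is }c\text{-big}\,\}$, the sum taken over all $2N$ vertices. A vertex of total degree $N$ can be $c$-small in every color only if $N\le k(m-1)$; thus, once $N\ge k(m-1)+1$, each of the $2N$ vertices is $c$-big for at least one $c$, so $\sum_c s_c\ge 2N$. Substituting gives $N^2\le 2(m-1)(k-1)N$, that is $N\le 2(m-1)(k-1)$ (the remaining range $N\le k(m-1)$ is harmless, since $k(m-1)\le 2(m-1)(k-1)$ for $k\ge 2$). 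Hence $BR_k(B_{m,m})\le 2(m-1)(k-1)+1$.

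It remains to check that this implies the stated bound. A short computation shows that the larger root of $x^2-2k(m-1)x+k(m^2-3)$ equals $k(m-1)+\sqrt{(m-1)^2(k^2-k)-k(2m-4)}$, and that $2(m-1)(k-1)$ lies strictly below it for all $k\ge 2$, $m\ge 3$. Indeed, since $2(m-1)(k-1)-k(m-1)=(m-1)(k-2)\ge 0$, the claim is, after squaring, the inequality $4(k-1)(m-1)^2>k(m^2-3)$; using $4(k-1)\ge 2k$ this follows from $2(m-1)^2>m^2-3$, i.e. from $(m-2)^2+1>0$. As $2(m-1)(k-1)$ is an integer strictly less than the root, $2(m-1)(k-1)+1\le\left\lceil k(m-1)+\sqrt{(m-1)^2(k^2-k)-k(2m-4)}\,\right\rceil$, which is exactly the asserted bound. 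The only delicate point in the whole argument is the per-color estimate, where one must exploit big--big independence on a single side so as not to double-count the small--small edges; everything afterward is bookkeeping and the final arithmetic comparison. In fact this route delivers something sharper than the stated inequality, so the genuine difficulty is not attaining tightness but setting up the count cleanly.
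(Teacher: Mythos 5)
Your argument is correct, but there is nothing in the paper to compare it against: the paper states this theorem as a quoted result of Hattingh and Joubert \cite{Hattingh13} and gives no proof, so your write-up stands or falls on its own merits. It stands. The spine characterization is right (a $c$-edge $vw$ spans a $c$-colored $B_{m,m}$ exactly when $\deg_c(v)\ge m$ and $\deg_c(w)\ge m$, the bipartition preventing any leaf collision); the per-color estimate $e_c\le(m-1)(2N-s_c)$ is a valid upper bound --- though, contrary to your closing remark, the small--small edges \emph{are} potentially counted in both terms, which is harmless since it only weakens an upper bound; the pigeonhole step ($N>k(m-1)$ forces every one of the $2N$ vertices to be big in some color, so $\sum_c s_c\ge 2N$) is correct; and the conclusion $N\le 2(m-1)(k-1)$ follows. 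Your final arithmetic also checks: the cited expression is the larger root of $x^2-2k(m-1)x+k(m^2-3)$, and evaluating that quadratic at $x_0=2(m-1)(k-1)$ gives $-4(k-1)(m-1)^2+k(m^2-3)<0$, which reduces via $4(k-1)\ge 2k$ to $(m-2)^2+1>0$; since $x_0\ge k(m-1)$, $x_0$ lies strictly below the root and the integer comparison with the ceiling goes through. In fact you have proved the strictly stronger bound $BR_k(B_{m,m})\le 2(m-1)(k-1)+1$, which is tight at $k=2$ (where Theorem \ref{bi} gives $BR_2(B_{m,m})=2m-1$, while the cited formula gives only $\lceil 2(m-1)+\sqrt{2(m-1)^2-2(2m-4)}\rceil$, e.g.\ $10$ rather than $7$ for $m=4$) and which still yields the $\Theta(k)$ conclusion the paper wants in combination with its lower bound $BR_k(B_{m,m})>k(m-1)$. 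So the only substantive correction is to your self-assessment: you did not merely reach the stated inequality by a clean route, you improved it.
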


Hence $BR_k(B_{m,n})=O(k)$.  We provide a lower bound to get the
following result.

\begin{thm}
  Fix $m\geq 3$.  Then $BR_k(B_{m,m})=\Theta(k)$.
\end{thm}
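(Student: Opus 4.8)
The upper bound $BR_k(B_{m,m}) = O(k)$ is already furnished by the preceding Hattingh--Joubert estimate, whose right-hand side is asymptotic to $2(m-1)k$ as $k\to\infty$ with $m$ fixed. So the entire content of this theorem is a matching lower bound $BR_k(B_{m,m}) = \Omega(k)$; that is, I plan to exhibit, for $N$ of order $(m-1)k$, an explicit $k$-coloring of $K_{N,N}$ containing no monochromatic $B_{m,m}$.

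The first step is to isolate the right structural obstruction. In the bipartite host $K_{N,N}$, an edge $vw$ with $v$ on the left and $w$ on the right is the spine of a monochromatic $B_{m,m}$ in color $c$ precisely when $v$ and $w$ each have at least $m$ neighbors in color $c$: the $m-1$ leaves at $v$ lie on the right and the $m-1$ leaves at $w$ lie on the left, so the two leaf sets sit on opposite sides and are automatically disjoint, requiring no further care. Consequently, any color class whose maximum degree is at most $m-1$ can contain no vertex eligible to be a center of $B_{m,m}$, and is therefore $B_{m,m}$-free. This reduces the problem to splitting the edges of $K_{N,N}$ into $k$ spanning subgraphs, each of maximum degree at most $m-1$.

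To do this I would take $N=(m-1)k$ and use the standard decomposition of $K_{N,N}$ into $N$ perfect matchings: writing the two sides as $\{v_0,\dots,v_{N-1}\}$ and $\{w_0,\dots,w_{N-1}\}$, the matching $M_i$ joining $v_j$ to $w_{(i+j)\bmod N}$ for $i=0,\dots,N-1$ partitions every edge exactly once. Grouping these matchings into $k$ consecutive bundles of $m-1$ matchings each and assigning a distinct color to each bundle makes every color class an $(m-1)$-regular bipartite graph. By the observation above, no color class contains a $B_{m,m}$, so $BR_k(B_{m,m}) > (m-1)k$. Combined with the $O(k)$ upper bound this yields $BR_k(B_{m,m}) = \Theta(k)$.

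There is essentially no hard step here: the only things to verify are that the cyclic matchings genuinely tile $K_{N,N}$ and that an $(m-1)$-regular graph has no vertex of degree $m$, both immediate. If one wishes to sharpen the constant rather than merely establish the growth rate, the interesting question would be to close the gap between the lower bound $(m-1)k+1$ produced by this regular construction and the Hattingh--Joubert upper bound near $2(m-1)k$; but for the $\Theta(k)$ claim the regular decomposition suffices.
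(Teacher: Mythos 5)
Your proposal is correct and is essentially the paper's own argument: both exhibit a cyclic $k$-coloring of $K_{N,N}$ with $N=k(m-1)$ in which every color class is $(m-1)$-regular (the paper colors $v_iw_j$ by $(i-j)\bmod k$, you bundle the cyclic perfect matchings into blocks of $m-1$; the resulting partitions differ only cosmetically), and both conclude via the observation that a spine vertex of $B_{m,m}$ needs degree $m$ in its color. No gap.
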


\begin{proof}
  We show that $BR_k(B_{m,m})>k\cdot (m-1)$.  Let $N=k\cdot(m-1)$, and consider a $k$-coloring of the edges of $K_{N,N}$ in colors $c_0,\ldots,c_{k-1}$.  Let the partite sets be $L=\{ v_0,\ldots,,v_{N-1}\}$ and $R=\{ w_0,\ldots, w_{N-1}\}$.  Color edge $v_iw_j$ with color $c_\ell$ if and only if $\ell\equiv(i-j)\mod k$.  Then the $c_\ell$-subgraph is $(m-1)$-regular, hence there can be no monochromatic $B_{m,m}$.
\end{proof}




\end{document}